\DeclareMathOperator{\ord}{ord}
\DeclareMathOperator{\Sol}{Sol(\qq)}
\DeclareMathOperator{\Param}{IFP(\qq)}
\DeclareMathOperator{\Pla}{Places}
\DeclareMathOperator{\RT}{RTrunc_N(\qq)}
\DeclareMathOperator{\ST}{STrunc_N(\qq)}
\newcommand{\C}{\mathbb{C}}
\newcommand{\cc}{\mathscr{C}(F)}
\newcommand{\cP}{\mathcal{P}}
\newcommand{\qq}{\mathbf{p}_0}
\newcommand{\im}{\mathrm{Im}}
\newcommand{\Places}{\Pla(\qq)}
\newcommand\para{\vspace*{2mm}}
\newtheorem{theorem}{Theorem}[section]
\newtheorem{proposition}[theorem]{Proposition}
\newtheorem{lemma}[theorem]{Lemma}
\newtheorem{corollary}[theorem]{Corollary}
\theoremstyle{definition}
\newtheorem{definition}[theorem]{Definition}
\newtheorem{example}[theorem]{Example}
\theoremstyle{remark}
\numberwithin{equation}{section}
\theoremstyle{remark}
\begin{document}

\date{\today}
\title[Existence and convergence of series solutions of ODE's]{Existence and convergence of Puiseux series solutions for autonomous first order differential equations}

\author{Jos\'{e} Cano}
\address{Dpto. Algebra, an\'alisis matem\'atico, geometr\'{\i}a y topolog\'{\i}a, Universidad de Valladolid, Spain.}
\email{jcano@agt.uva.es}

\author{Sebastian Falkensteiner}
\address{Research Institute for Symbolic Computation (RISC), Johannes Kepler University Linz, Austria.}
\email{falkensteiner@risc.jku.at}

\author{J.Rafael Sendra}
\address{Dpto. de F\'{\i}sica y Matem\'aticas, Universidad de Alcal\'a, Madrid, Spain.}
\email{rafael.sendra@uah.es}

\thanks{The first author was partially supported by MTM2016-77642-C2-1-P (AEI/FEDER, UE). The second and third authors were partially supported by FEDER/Ministerio de Ciencia, Innovaci\'{o}n y Universidades Agencia Estatal de Investigaci\'{o}n/MTM2017-88796-P (Symbolic Computation: new challenges in Algebra and Geometry together with its applications).
	The second author was also supported by the Austrian Science Fund (FWF): P 31327-N32. 
	The third author is member of the Research Group ASYNACS (Ref. CT-CE2019/683).}

\begin{abstract}
Given an autonomous first order algebraic ordinary differential equation $F(y,y')=0$, we prove that every formal Puiseux series solution of $F(y,y')=0$, expanded around any finite point or at infinity, is convergent.
The proof is constructive and we provide an algorithm to describe all such Puiseux series solutions.
Moreover, we show that for any point in the complex plane there exists a solution of the differential equation which defines an analytic curve passing through this point.
\end{abstract}

\maketitle

\noindent \textbf{keywords}
Algebraic differential equation,
algebraic curve,
place,
formal Puiseux series solution,
convergent solution.

\section{Introduction}
We study local solutions of, not necessarily linear, autonomous first order ordinary differential equations of the form $F(y,y')=0$, where $F(y,p)$ is a polynomial (or indeed a holomorphic function) in two variables.
Rational and algebraic solutions of these equations have been studied in \cite{feng2004rational,feng2006polynomial} and \cite{aroca2005algebraic}.
In particular, they found degree bounds of the possible rational or algebraic solutions such that these global solutions can be computed algorithmically.
In \cite{FalkensteinerSendra_2018} it is proven that any formal power series solution of an autonomous first order algebraic ordinary differential equations is convergent.
We extend this result to the case of fractional power series solutions and give an algorithm to compute all of them.

The problem of finding power series solutions of ordinary differential equations has been extensively studied in the literature.
A method to compute generalized formal power series solutions, i.e. power series with real exponents, and describe their properties is the Newton polygon method.
A description of this method is given in \cite{fine1889,fine1890} and more recently in \cite{GrSi:1991,DoraJung1997,Aroca2000}.
In \cite{Cano2005}, the first author, using the Newton polygon method, gives a theoretical description of all generalized formal power series solution of a non-autonomous first order ordinary differential equation as a finite set of one parameter families of generalized formal power series.
This description of the solutions is in general not algorithmic by several reasons.
One of them is that there is no bound on the number of terms which have to be computed in order to guarantee the existence of a generalized formal power series solution when extending a given truncation of a determined potential solution.
Also the uniqueness of the extension can not be ensured a-priori. 
The direct application of the Newton polygon method to autonomous first order differential equations does not provide any advantage with respect to the non-autonomous case, because during the computations the characteristic of being autonomous gets lost.

In \cite{RISC5589} they derive an associated differential system to find rational general solutions of non-autonomous first order differential equations by considering rational parametrizations of the implicitly defined curve.
We instead consider its places and obtain an associated differential equation of first order and first degree which can be treated by the Newton polygon method, described in \cite{CanoJ-1993A}.
Using the known bounds for computing places of algebraic curves (see e.g. \cite{Duval1989}), existence and uniqueness of the solutions and the termination of our computations can be ensured.

The structure of the paper is as follows. Section \ref{sec:preliminaries} is devoted to recall the preliminary theory on formal Puiseux series and algebraic curves used throughout the paper.
In Section \ref{sec-PSP} we show that every non-constant formal Puiseux series solution defines a place of the associated curve.
We give a necessary condition on a place of the curve to contain in its equivalence class formal Puiseux series solutions of the original differential equation, and show the analyticity of them.
In the case where the solutions are expanded around a finite point, the necessary condition turns out to be sufficient as well.
As a byproduct, we obtain a new proof of the fact that there is an analytic solution curve of $F(y,y')=0$ passing through any given point in the plane.
This result is a consequence of Section 6.10 in \cite{ArocaF_2000}.
In Section \ref{sec-algorithms} algorithms for computing all Puiseux series solutions are presented and illustrated by examples.
Subsection \ref{subsec-zero} is devoted to solutions expanded around zero.
For proving the correctness of the algorithm, we give a precise bound on the number of terms such that the solutions are in bijection with the corresponding truncations.
In Subsection \ref{subsec-infinity} we consider solutions expanded at infinity.
Here we are able to compute for every solution a corresponding truncation, but in this case we are not able to guarantee the uniqueness of the extension.

\section{Puiseux series solutions and places}\label{sec:preliminaries}

In this section we introduce the notation, assumptions, and main notions that will be used throughout this paper.

\para

Let us consider the differential equation
\begin{equation}\label{eq-main}
F(y,y')=0,
\end{equation}
where $F \in \C[y,p]$ is non-constant in the variables $y$ and $p$. 
We will study existence and convergence of formal Puiseux series solutions of \eqref{eq-main}.
Formal Puiseux series can either be expanded around a finite point or at infinity.
In the first case, since equation \eqref{eq-main} is invariant under translation of the independent variable, without loss of generality we can assume that the formal Puiseux series is expanded around zero and it is of the form $\varphi(x)=\sum_{j\geq j_0}a_j\,x^{j/n}$, where $a_j\in \C,$ $n \in \mathbb{Z}_{>0}$ and $j_0\in \mathbb{Z}$.
In the case of infinity we can use the transformation $x=1/z$ obtaining the (non-autonomous) differential equation $F(y(z),-z^2y'(z))=0$.
In order to deal with both cases in a unified way, we will study equations of the type
\begin{equation}\label{eq-infinity}
F(y(x),x^hy'(x))=0,
\end{equation}
with $h \in \mathbb{Z} \setminus \{1\}$ and its formal Puiseux series solutions expanded around zero.
We note that for $h=0$ equation \eqref{eq-infinity} is equal to \eqref{eq-main} and for $h=2$ the case of formal Puiseux series solutions expanded at infinity is treated. 
In the sequel, we assume that $h$ is fixed.

We use the notations $\C[[x]]$ for the ring of formal power series, $\C((x))$ for its fraction field and $\C((x))^*=\bigcup_{n \geq 1} \C((x^{1/n}))$ for the field of formal Puiseux series expanded at zero.
We call the minimal natural number $n$ such that $\varphi(x)$ belongs to $\C((x^{1/n}))$ the \textit{ramification order} of $\varphi(x)$. 
Moreover, for $\varphi(x)=\sum_{j\geq j_0}a_j\,x^{j/n}$ with $a_{j_0} \neq 0$ we call $j_0/n \in \mathbb{Q}$ the order of $\varphi$, denoted by $\ord_x(\varphi(x))$, and set $\ord_x(\varphi(x))=\infty$ for $\varphi=0$.

\para

Associated to \eqref{eq-infinity} there is an affine algebraic curve $C(F) \subset \C^2$ defined by the zero set of $F(y,p)$ in $\C^2$.
We denote by $\cc$ the Zariski closure of $C(F)$ in $\C_{\infty}^2,$ where $\C_{\infty}=\C\cup \{\infty\}$.
In addition we assume throughout the paper that $F$ has no factor in $\C[y]$ or $\C[p]$.

Additionally, we may require that a formal Puiseux series solution $y(x)$ of \eqref{eq-infinity} fulfills the initial conditions $y(0)=y_0, (x^hy'(x))(0)=p_0$ for some fixed $\qq=(y_0,p_0) \in \C_{\infty}^2$.
In the case where $y(0)=\infty$, $\tilde{y}(x)=1/y(x)$ is a Puiseux series solution of a new first order differential equation of the same type, namely the equation given by the numerator of the rational function $F(1/y,-x^h p/y^2)$, and $\tilde{y}(0)\in \C$.
Therefore, in the sequel, we assume that $\qq\in \C\times \C_{\infty}$.

\para

Here we recall some classical terminology, see e.g. \cite{walker1950algebraic}.
A \textit{formal parametrization} centered at $\qq\in \cc$ is a pair of formal Puiseux series $A(t)\in \C((t))^2\setminus \C^2$ such that $A(0)=\qq$ and $F(A(t))=0$.
In the set of all formal parametrizations of $\cc$ we introduce the equivalence relation $\sim$ by defining $A(t) \sim B(t)$ if and only if there exists a formal power series $s(t)\in \C[[t]]$ of order one such that $A(s(t))=B(t)$.
A formal parametrization is said to be reducible if it is equivalent to another one in $\C((t^m))^2$ for some $m>1$.
Otherwise, it is called irreducible.
An equivalence class of an irreducible formal parametrization $(a(t),b(t))$ is called a \textit{place} of $\cc$ centered at the common center point $\qq$ and is denoted by $[(a(t),b(t))]$.
In every place there is, up to the substitution of $n$-th root of unities, exactly one formal parametrization of the type $(a_0+t^n,b(t))$ and we refer to them as classical Puiseux parametrizations.
We observe that $\ord_t(a(t)-y_0)$ and $\ord_t(b(t))$ are independent of the representative $(a(t),b(t))$ of a place of $\cc$ centered at $\qq$.

\section{Puiseux solution places}\label{sec-PSP}
Let us consider the sets $\Sol$ containing the non-constant formal Puiseux series solutions of equation \eqref{eq-infinity} with initial values $\qq$, $\Param$ containing all irreducible formal parametrizations of $\cc$ at $\qq$ and $\Places$ containing the places of $\cc$ centered at $\qq$.
Let us define the mapping $\Delta:\Sol \longrightarrow \Param$ as $$\Delta(y(x))=\left(y(t^{n}),t^{hn}\, \frac{d\, y}{d\, x}(t^{n})\right),$$ where $n$ is the ramification order of $y(x)$ and denote by $\delta: \Sol \longrightarrow \Places$ the map $\delta(y(x))=[\Delta(y(x))]$.
The map $\Delta$ is well defined because on the one hand, $\Delta(y(x))$ is a formal parametrization of $\cc$ centered at $\qq$ and on the other hand, by the definition of the ramification order, one deduces that $\Delta(y(x))$ is irreducible.

We remark that, since $\Delta$ is well defined, a necessary condition for $y(x) \in \Sol$ is that $\qq \in \cc$.

\begin{definition}
	A place $\cP \in \Places$ is a \textit{(Puiseux) solution place} of \eqref{eq-infinity} if there exists $y(x)\in \Sol$ such that $\delta(y(x))=\cP$.
	Moreover, we say that $y(x)$ is a \textit{generating Puiseux (series) solution} of the place $\cP$.
	An irreducible formal parametrization $A(t)\in\Param$ is called a \textit{solution parametrization} if $A\in \im(\Delta)$.
\end{definition}

Note that the above definition generalizes the notion of solution place in \cite{FalkensteinerSendra_2018} for formal power series solutions to Puiseux series solutions.

Now we give a characterization for an irreducible formal parametrization to be a solution parametrization.
Later we will show how to decide whether a given place contains a solution parametrization, i.e. whether it is a solution place.

\begin{lemma}\label{lemma:nec_cond_DE}
	Let $y(x)\in \Sol$  be of ramification order $n$, and let $(a(t),b(t))=\Delta(y(x))$. It holds that
	\begin{align}
	&a'(t)=n\,t^{n(1-h)-1}\,b(t). \label{eq:necessary_DE} \\
	&n(1-h)=\ord_t(a(t)-y_0)-\ord_{t}(b(t)). \label{eq:nec_eq_order_ramification}
	\end{align}
\end{lemma}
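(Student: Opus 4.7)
The proof is essentially a direct computation unpacking the definition of $\Delta$, together with the elementary observation that differentiation lowers the order by exactly one for non-constant Puiseux series.

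First, I would write out $\Delta(y(x))$ explicitly: since $n$ is the ramification order of $y(x)$,
\[
a(t)=y(t^n),\qquad b(t)=t^{hn}\,y'(t^n).
\]
Because $y(x)\in\Sol$ satisfies $y(0)=y_0\in\C$, the series $a(t)$ lies in $\C[[t]]$ with $a(0)=y_0$, and it is non-constant since $y(x)$ is non-constant. Applying the chain rule to $a(t)=y(t^n)$ gives $a'(t)=n\,t^{n-1}\,y'(t^n)$. Substituting $y'(t^n)=t^{-hn}b(t)$ (from the definition of $b$) yields
\[
a'(t)=n\,t^{n-1}\cdot t^{-hn}\,b(t)=n\,t^{n(1-h)-1}\,b(t),
\]
which is exactly \eqref{eq:necessary_DE}.

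For \eqref{eq:nec_eq_order_ramification}, I would take $\ord_t$ on both sides of \eqref{eq:necessary_DE}:
\[
\ord_t(a'(t))=n(1-h)-1+\ord_t(b(t)).
\]
The key identity to invoke is that, for a non-constant series $a(t)\in\C[[t]]$ with $a(0)=y_0$, one has $\ord_t(a'(t))=\ord_t(a(t)-y_0)-1$. (Writing $a(t)=y_0+c\,t^k+\cdots$ with $c\neq 0$ and $k\geq 1$, differentiation gives leading term $ck\,t^{k-1}$; the characteristic-zero setting ensures $ck\neq 0$.) Substituting this identity into the previous equality and cancelling the $-1$ gives \eqref{eq:nec_eq_order_ramification}.

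There is no real obstacle; the only point requiring care is to justify that $a(t)-y_0$ is indeed non-zero (so that its order is finite), which follows from $y(x)$ being non-constant, and that the derivative does not unexpectedly drop order further, which is immediate in characteristic zero. The non-negativity of $\ord_t(a(t)-y_0)$ (as opposed to possibly $\ord_t(a(t))$ being negative) is guaranteed by the standing assumption $y_0\in\C$, so the subtraction of $y_0$ is meaningful and $a(t)\in\C[[t]]$.
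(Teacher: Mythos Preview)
Your proof is correct and follows essentially the same approach as the paper's own proof: both use the chain rule on $a(t)=y(t^n)$ combined with the definition of $b(t)$ to obtain \eqref{eq:necessary_DE}, and then take orders on both sides to deduce \eqref{eq:nec_eq_order_ramification}. Your version simply spells out a few routine justifications (non-constancy of $a(t)-y_0$, the characteristic-zero order drop under differentiation) that the paper leaves implicit.
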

\begin{proof}
	Since $a(t)=y(t^{n})$ and $b(t)=t^{hn}\,y'(t^{n})$, by the chain rule
	\begin{align*}
	a'(t)= n\,t^{n-1}\,y'(t^{n})=n\,t^{n(1-h)-1}\,b(t).
	\end{align*}
	Equation \eqref{eq:nec_eq_order_ramification} is obtained by taking the order in $t$ on both sides of equation \eqref{eq:necessary_DE}.
\end{proof}

\begin{proposition}\label{Theorem:Characterization Sol Param}
	Let $(a(t),b(t))\in \Param$. Then $(a(t),b(t))$ is a solution parametrization if and only if there exists $n\in \mathbb{Z}_{>0}$ such that equation \eqref{eq:necessary_DE} holds.
	In this case, $n$ is the ramification order of $(a(t),b(t))$.
\end{proposition}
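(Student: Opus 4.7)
The forward direction is precisely Lemma~\ref{lemma:nec_cond_DE}, so the content lies in the converse. Given $(a(t),b(t))\in\Param$ and $n\in\mathbb{Z}_{>0}$ with $a'(t)=n\,t^{n(1-h)-1}b(t)$, the aim is to exhibit $y(x)\in\Sol$ satisfying $\Delta(y(x))=(a(t),b(t))$, and then to verify that $n$ is the ramification order of this $y(x)$.

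The natural candidate is $y(x):=a(x^{1/n})$, read formally as the Puiseux series $\sum a_j x^{j/n}$; this is legitimate because $a(0)=y_0\in\C$ forces $a(t)\in\C[[t]]$. A direct chain-rule computation together with the hypothesis gives $x^h y'(x)=b(t)$ after the substitution $t=x^{1/n}$, and hence $F(y(x),x^h y'(x))=F(a(t),b(t))=0$. The initial conditions $y(0)=y_0$ and $(x^hy'(x))(0)=p_0$ are read off from $(a(0),b(0))=\qq$, so $y(x)\in\Sol$.

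The step I expect to be the main obstacle is pinning down that the ramification order of this $y(x)$ equals exactly $n$, which is what ensures that $\Delta$ uses the same exponent and that $\Delta(y(x))$ coincides with the given $(a(t),b(t))$ rather than with a parametrization in a smaller variable. I plan to argue by contradiction: if the ramification order were $n/d$ for some divisor $d>1$ of $n$, then $a(t)=y(t^n)$ would lie in $\C[[t^d]]$, say $a(t)=\hat{a}(t^d)$. Differentiating yields $a'(t)=d\,t^{d-1}\hat{a}'(t^d)$, and solving \eqref{eq:necessary_DE} for $b(t)$ gives $b(t)=(d/n)\,t^{d-n(1-h)}\hat{a}'(t^d)$. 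Because $d\mid n$, also $d\mid n(1-h)$, so the monomial prefactor lies in $\C((t^d))$, forcing $b(t)\in\C((t^d))$ as well. Both components of $(a(t),b(t))$ would then sit in $\C((t^d))^2$ with $d>1$, contradicting the irreducibility hypothesis $(a(t),b(t))\in\Param$.

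Once the ramification order of $y(x)$ is fixed at $n$, the chain-rule identity already established gives $\Delta(y(x))=(y(t^n),t^{hn}y'(t^n))=(a(t),b(t))$, so $(a(t),b(t))$ is a solution parametrization and $n$ is the asserted ramification order, completing the proof.
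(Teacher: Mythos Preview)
Your proof is correct and follows essentially the same approach as the paper: define $y(x)=a(x^{1/n})$, verify via the chain rule and \eqref{eq:necessary_DE} that $y\in\Sol$ with $x^h y'(x)=b(x^{1/n})$, and then argue by contradiction that the ramification order must be exactly $n$ because otherwise both $a(t)$ and $b(t)$ would lie in $\C((t^d))$ for some $d>1$, violating irreducibility. The only cosmetic difference is that the paper compares coefficients directly (using $a_{j+n(1-h)}\neq 0 \iff b_j\neq 0$), whereas you phrase the same step via the factorization $a(t)=\hat a(t^d)$ and explicit differentiation; the substance is identical.
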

\begin{proof}
	The first implication follows from Lemma \ref{lemma:nec_cond_DE}.
	Let us now assume that \eqref{eq:necessary_DE} holds for an $n \in \mathbb{Z}_{>0}$ and write $a(t)=y_0+\sum_{j \geq k} a_j\,t^{j}$ with $k>0$, $a_k\neq 0$, and $b(t)=\sum_{j \geq k-n(1-h)} b_j\,t^{j}$.
	Let us consider $y(x)=y_0+\sum_{j \geq k} a_j\,x^{j/n}$.
	By assumption, $y'(x)=x^{-h}\,b(x^{1/n})$ and $$F(y(x),x^hy'(x))=F(a(x^{1/n}),b(x^{1/n}))=0.$$ Thus, $y(x) \in \Sol$.
	It remains to show that $n$ is the ramification order of $y(x)$.
	Otherwise, there exists a natural number $m\geq 2$, such that $m$ divides $n$ and if $a_i\neq 0$ then $m$ divides $i$.
	By assumption, we have that $a_{j+n(1-h)}\neq 0$ if and only if $b_{j}\neq 0$.
	Hence, if $b_j\neq 0$, then $m$ divides $j$.
	This implies that $(a(t),b(t))$ is reducible in contradiction to our assumption.
	Therefore, $n$ is the ramification order of $y(x)$ and $\Delta(y(x))=(a(t),b(t))$.
\end{proof}

\begin{lemma}\label{lemma:ram order solution place}
	All Puiseux series solutions in $\Sol$, generating the same solution place in $\Places$, have the same ramification order.
	We call this number the \textit{ramification order} of the solution place.
	As a consequence, the map $\Delta$ is injective.
\end{lemma}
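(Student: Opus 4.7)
The plan is to extract the ramification order from \emph{order invariants} that are shared by equivalent parametrizations, and then use Lemma~\ref{lemma:nec_cond_DE} to pin down $n$ from those invariants, crucially using that $h\neq 1$.

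First, I would fix $y_1,y_2\in\Sol$ with $\delta(y_1)=\delta(y_2)$, denote by $n_1,n_2$ their respective ramification orders, and set $(a_i(t),b_i(t))=\Delta(y_i)$. Since both parametrizations represent the same place $\cP\in\Places$, they are equivalent, so there is a power series $s(t)\in\C[[t]]$ of order one such that $(a_1(s(t)),b_1(s(t)))=(a_2(t),b_2(t))$. Because composition with an order-one series preserves the $t$-adic order, it follows that
\begin{equation*}
\ord_t(a_1(t)-y_0)=\ord_t(a_2(t)-y_0) \quad\text{and}\quad \ord_t(b_1(t))=\ord_t(b_2(t)).
\end{equation*}
(This is the point emphasized at the end of Section~\ref{sec:preliminaries}.)

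Next, I would apply equation \eqref{eq:nec_eq_order_ramification} of Lemma~\ref{lemma:nec_cond_DE} to each of $(a_1,b_1)$ and $(a_2,b_2)$. This yields
\begin{equation*}
n_1(1-h)=\ord_t(a_1(t)-y_0)-\ord_t(b_1(t))=\ord_t(a_2(t)-y_0)-\ord_t(b_2(t))=n_2(1-h).
\end{equation*}
Since by standing assumption $h\in\mathbb{Z}\setminus\{1\}$, the factor $1-h$ is nonzero, so $n_1=n_2$. This proves the first assertion and justifies calling the common value the ramification order of the solution place.

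For the injectivity of $\Delta$, suppose $\Delta(y_1)=\Delta(y_2)$. Then a fortiori $\delta(y_1)=\delta(y_2)$, so by the previous paragraph the two ramification orders coincide, say both equal $n$. The equality of the first components gives $y_1(t^n)=y_2(t^n)$ in $\C[[t]]$. Since both $y_1$ and $y_2$ lie in $\C((x^{1/n}))$, and the substitution $x^{1/n}\mapsto t$ defines an isomorphism of fields $\C((x^{1/n}))\cong\C((t))$, we conclude $y_1(x)=y_2(x)$. I do not foresee a genuine obstacle; the only subtle point is noticing that the standing hypothesis $h\neq 1$ is precisely what makes the argument work, which is why the case of an ordinary Puiseux expansion ($h=0$) and the case at infinity ($h=2$) are both admissible, while the degenerate value $h=1$ is excluded from the start.
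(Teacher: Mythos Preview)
Your proof is correct. It differs from the paper's in one respect worth noting: you invoke the order relation \eqref{eq:nec_eq_order_ramification} directly, combining it with the place-invariance of $\ord_t(a(t)-y_0)$ and $\ord_t(b(t))$ to conclude $n_1(1-h)=n_2(1-h)$ immediately. The paper instead applies the differential relation \eqref{eq:necessary_DE} to both parametrizations and uses the chain rule on $a_2(t)=a_1(s(t))$ to derive the identity
\[
n\,s(t)^{n(1-h)-1}\,s'(t)=m\,t^{m(1-h)-1},
\]
only then comparing orders. Your route is shorter and entirely sufficient for the lemma as stated; the paper's detour, however, produces this reparametrization identity (labeled \eqref{eq:diffS}), which is reused verbatim in the proof of Lemma~\ref{lemma:number of solution parametrizations} to count and describe the solution parametrizations in a solution place. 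So the extra work in the paper's version is not wasted, but for the present lemma alone your argument is the cleaner one. The injectivity argument is identical in both.
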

\begin{proof}
	Let $y_1,y_2 \in \Sol$ be such that $\delta(y_1)=\delta(y_2)$.
	Let $n$ and $m$ be the ramification orders of $y_1$ and $y_2$, respectively.
	Then there exists an order one formal power series $s(t)$ such that $\Delta(y_1)(s(t))=\Delta(y_2)(t)$.
	Let us denote $\Delta(y_i)$ as $\Delta(y_i)=(a_i,b_i)$ with $i=1,2$.
	By equation \eqref{eq:necessary_DE}
	\[ a_2'(t)=m\,t^{m(1-h)-1}\,b_2(t)=m\,t^{m(1-h)-1}\,b_1(s(t)) \]
	and
	\[ a_2'(t)=(a_1(s(t)))'=a_1'(s(t))\,s'(t)=n\,s(t)^{n(1-h)-1}\,b_1(s(t))\,s'(t). \]
	Since $y_1\in \Sol$ is not constant, $b_1(s(t))$ is not zero.
	Therefore,
	\begin{equation}\label{eq:diffS}
	n\,s(t)^{n(1-h)-1}\,s'(t)=m\,t^{m(1-h)-1}.
	\end{equation}
	Finally, comparing orders, since $h \neq 1$ by assumption, we get that $n=m$.
	
	Assume now that $\Delta(y_1)=\Delta(y_2)$.
	Then, $\delta(y_1)=\delta(y_2)$ and hence, $y_1(t^n)=y_2(t^n)$.
	Thus, $y_1(x)=y_2(x)$.
\end{proof}

\begin{definition}
	The \textit{ramification order} of a solution parametrization $A(t)$ is defined as the ramification order of $\Delta^{-1}(A(t))$.
\end{definition}

In the following we analyze the number of solution parametrizations in a solution place.
We start with a technical lemma.

\begin{lemma}\label{lemma:laurent}
	Let $a(t)\in \C((t))$ be non-constant and let $\alpha_1,\alpha_2\in \C$ be two different $k$-th roots of unity.
	If $a(\alpha_1 t)=a(\alpha_2 t)$, then there exists a minimal $m\in \mathbb{Z}_{>1}$, with $m \leq k$, such that $a(t)$ can be written as $a(t)=\sum_{j\geq j_0/m} a_{jm}\,t^{jm}$.
\end{lemma}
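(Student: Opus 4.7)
The plan is to compare Laurent coefficients of $a(\alpha_1 t)$ and $a(\alpha_2 t)$ directly. First I would write $a(t) = \sum_{j \geq j_0} a_j\,t^j$ with $a_{j_0} \neq 0$ (the non-constant hypothesis ensures $j_0$ is well defined). The equation $a(\alpha_1 t) = a(\alpha_2 t)$ then rewrites as $a_j\,\alpha_1^j = a_j\,\alpha_2^j$ for every $j$, so for each index $j$ with $a_j \neq 0$ one must have $(\alpha_1/\alpha_2)^j = 1$.

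Next, I would introduce $\beta = \alpha_1/\alpha_2$. Since $\alpha_1, \alpha_2$ are distinct $k$-th roots of unity, $\beta$ is a $k$-th root of unity with $\beta \neq 1$, so its multiplicative order $m$ is well defined and satisfies $1 < m \leq k$ (in fact $m \mid k$). The standard fact $\beta^j = 1 \Leftrightarrow m \mid j$ combined with the previous step forces $a_j = 0$ whenever $m \nmid j$; in particular $m \mid j_0$, so $j_0/m$ is a genuine integer and
\[ a(t) = \sum_{i \geq j_0/m} a_{im}\,t^{im}, \]
which is the form required by the lemma.

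To obtain the \emph{minimal} such $m$, I would note that the set $S = \{m' \in \mathbb{Z}_{>1} : a(t) \in \C((t^{m'}))\}$ is non-empty by the argument just given (the order of $\beta$ lies in $S$) and bounded below by $2$, so it admits a minimum; replacing $m$ by $\min S$ preserves both the form and the bound $m \leq k$. I do not expect any real obstacle: the whole argument is a coefficient comparison plus an elementary fact about orders of roots of unity. The only spot demanding minor care is verifying that $j_0/m \in \mathbb{Z}$ so that the rewritten series is legitimate, and this is forced automatically by $a_{j_0}\neq 0$.
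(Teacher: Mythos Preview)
Your argument is correct and essentially identical to the paper's: both write $a(t)=\sum_{j\ge j_0}a_jt^j$, compare coefficients to get $(\alpha_1/\alpha_2)^j=1$ whenever $a_j\neq 0$, take $m$ to be the multiplicative order of $\beta=\alpha_1/\alpha_2$, and use $\beta^j=1\Leftrightarrow m\mid j$ to rewrite the series. You add two small refinements the paper leaves implicit---the observation that $m\mid j_0$ so that $j_0/m\in\mathbb{Z}$, and the passage to $\min S$ to justify the word ``minimal''---but the core idea and structure are the same.
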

\begin{proof}
	Let $a(t)=\sum_{j \geq j_0} a_j\,t^j$.
	Since $a(\alpha_1 t)=a(\alpha_2 t)$, then $a_j \alpha_{1}^{j}=a_j \alpha_{2}^{j}$.
	So, if $a_j\neq 0$ then $(\alpha_1/\alpha_2)^{j}=1$.
	Let $m \in \mathbb{Z}_{>1}$ be minimal such that $\alpha_1/\alpha_2$ is an $m$-th primitive root of unity.
	Then $(\alpha_1/\alpha_2)^{j}=1$ if and only if $j$ is a multiple of $m$ and this implies that $a(t)=\sum_{j\geq j_0/m} a_{jm}\,t^{jm}$.
\end{proof}

\begin{lemma}\label{lemma:number of solution parametrizations}
	Let $[A]$ be solution place of ramification order $n$. 
	It holds that 
	\begin{enumerate}
		\item If $h \leq 0$, then there are exactly $n(1-h)$ solution parametrizations in $[A]$, $A(t)$ is a solution parametrization, and all solution parametrizations in the place are of the form $A(\alpha\,t)$ where $\alpha^{n(1-h)}=1$.
		\item If $h \geq 2$, then there are infinitely many solution parametrizations in $[A]$.
	\end{enumerate}
\end{lemma}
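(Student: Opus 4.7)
The plan is to fix a representative $A(t) = (a(t), b(t))$ of $[A]$ which is itself a solution parametrization, describe every other representative as $A \circ s$ with $s \in \C[[t]]$ of order $1$, and determine for which $s$ the reparametrization $A \circ s$ is again a solution parametrization. First, I would combine Lemma \ref{lemma:ram order solution place} (all solution parametrizations in $[A]$ share the ramification order $n$) with Proposition \ref{Theorem:Characterization Sol Param} to reduce the problem to the condition $(a(s(t)))' = n\,t^{n(1-h)-1}\,b(s(t))$. Applying the chain rule together with \eqref{eq:necessary_DE} for $A$, and cancelling the nonzero factor $n\,b(s(t))$, this collapses to
\[ s(t)^{k-1}\,s'(t) = t^{k-1}, \qquad k := n(1-h), \]
which formally integrates to $s(t)^k = t^k + C$ for some $C \in \C$. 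The entire count is thus governed by the order-$1$ solutions of this equation in $\C[[t]]$.

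For part (1), I would argue that when $h \leq 0$ and hence $k \geq 1$, the series $s^k$ has positive order, forcing $C = 0$ and $s^k = t^k$. Matching leading coefficients yields $s(t) = \alpha\,t$ with $\alpha^k = 1$, producing exactly $k = n(1-h)$ candidate parametrizations $A(\alpha\,t)$; the choice $\alpha = 1$ confirms that $A$ itself is a solution parametrization. To see these are pairwise distinct, I would show that $A(\alpha_1 t) = A(\alpha_2 t)$ with $\alpha_1 \neq \alpha_2$ makes $A$ invariant under $t \mapsto (\alpha_1/\alpha_2)\,t$, and then coefficient matching (in the spirit of Lemma \ref{lemma:laurent}) places $A$ in $\C((t^d))^2$ with $d > 1$ the multiplicative order of the quotient, contradicting irreducibility.

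For part (2), when $h \geq 2$ and $k = -m$ with $m = n(h-1) \geq 1$, I would rewrite the equation as $s^m = t^m/(1 + C\,t^m) = t^m - C\,t^{2m} + C^2\,t^{3m} - \cdots$, which for each $C \in \C$ and each $m$-th root of unity $\zeta$ admits a unique order-$1$ $m$-th root $s = \zeta\,t\,(1 - (C/m)\,t^m + \cdots) \in \C[[t]]$. Distinct values of $C$ yield distinct $s$, since the coefficient of $t^{m+1}$ in $s$ is proportional to $C$; and—extending the irreducibility argument from part (1) to general order-$1$ reparametrizations by passing to a classical Puiseux form of $A$—distinct $s$ yield distinct parametrizations $A \circ s$. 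This exhibits uncountably many solution parametrizations in $[A]$. The main technical subtlety is the integration step in this case: since $k < 0$, the identity $s^k = t^k + C$ lives in the Laurent series ring, and one must verify that the right hand side admits an order-$1$ power series $m$-th root by checking that $t^m/(1 + C\,t^m)$ is a genuine element of $\C[[t]]$ of order exactly $m$, which it is.
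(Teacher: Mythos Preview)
Your proof is correct and follows essentially the same route as the paper: both fix a solution parametrization $A=(a,b)$ in the place, use Proposition~\ref{Theorem:Characterization Sol Param} together with \eqref{eq:necessary_DE} to reduce to the separable equation $s^{\,n(1-h)-1}s' = t^{\,n(1-h)-1}$, integrate, and count order-one solutions (obtaining $s=\alpha t$ with $\alpha^{n(1-h)}=1$ when $h\le 0$, and a one-parameter family when $h\ge 2$), invoking Lemma~\ref{lemma:laurent} and irreducibility for distinctness. The only noteworthy difference is that in case~(2) you actually justify that distinct reparametrizations $s$ yield distinct $A\circ s$ (by conjugating into classical Puiseux form and applying the irreducibility argument), whereas the paper simply asserts this.
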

\begin{proof}
	Let, for $i=1,2$, $(a_i,b_i)\in [A]$ be two different solution pa\-ra\-me\-tri\-za\-tions.
	As a consequence of equation \eqref{eq:diffS}, we get that the order one formal power series $s(t)$ relating $(a_1,b_1)$ and $(a_2,b_2)$ satisfies
	\begin{equation} \label{eq-help}
	s(t)^{n(1-h)-1}\,s'(t)=t^{n(1-h)-1},
	\end{equation} where $n$ is the ramification order of the place.
	Conversely, let $s(t)$ be a solution of \eqref{eq-help} with $\ord_t(s(t))=1$ and $(a_3(t),b_3(t))=(a_1(s(t)),b_1(s(t))$.
	Then
	\[ a_3'(t)=(a_1(s(t)))'=a_1'(s(t))\,s'(t)=n\,s(t)^{n(1-h)-1}\,b_1(s(t))\,s'(t), \]
	and by using equation \eqref{eq-help},
	\[a_3'(t)=n\,t^{n(1-h)-1}\,b_3(t). \]
	Then, by Proposition \ref{Theorem:Characterization Sol Param}, $(a_3(t),b_3(t))=(a_1(s(t)),b_1(s(t))$ is a solution paramerization.
	
	Let us compute the solutions of \eqref{eq-help} by separation of variables.
	If $h \leq 0$, then $s(t)=\alpha\,t$, where $\alpha^{n(1-h)}=1$.
	Therefore, the set of all solution parametrizations in $[A]$ is $$\mathcal{A}:=\{(a_1(\alpha\,t),b_1(\alpha\,t))\, |\,\alpha^{n(1-h)}=1\}.$$
	Let us verify that $\#(\mathcal{A})=n(1-h)$.
	If $n(1-h)=1$, the result is trivial.
	Let $n(1-h)>1$, and let us assume that $\#(\mathcal{A})<n(1-h)$.
	Then, there exist two different $n(1-h)$-th roots of unity, $\alpha_1,\alpha_2$, such that $(a_1(\alpha_1 t),b_1(\alpha_1 t))=(a_1(\alpha_2 t),b_1(\alpha_2 t))$.
	By Lemma \ref{lemma:laurent} there exists $m\in \mathbb{Z}_{>1}$ with $m\leq n(1-h)$, such that $a_1(t),b_1(t)$ can be written as $a_1(t)=\sum_{j\geq j_0/m} c_{jm}\,t^{jm}$ and $b_1(t)=\sum_{j\geq k_0/m} d_{jm}\,t^{jm}$.
	This implies that $(a_1,b_1)$ is reducible, which is a contradiction.
	
	If $h \geq 2$, the solutions of \eqref{eq-help} are of the form $$s(t)=\frac{\alpha\,t}{\sqrt[n(h-1)]{1+t^{n(h-1)}\,c}},$$ where $c$ is an arbitrary constant and $\alpha^{n(h-1)}=1$.
	Note that $s(t)$ can indeed be written as a formal power series of first order and for every choice $c \in \C$ the solution parametrization is distinct.
\end{proof}

For a given parametrization $(a(t),b(t)) \in \Param$ satisfying \eqref{eq:nec_eq_order_ramification}, our strategy for finding the solutions will be to determine  reparametrizations fulfilling \eqref{eq:necessary_DE}, i.e. we are looking for $s(t) \in \C[[t]]$ with $\ord_t(s(t))=1$ such that $(a(s(t)),b(s(t))$ satisfies
\begin{equation*}
\frac{d\,(a(s(t)))}{dt}=n\,t^{n(1-h)-1}\,b(s(t)).
\end{equation*}
By applying the chain rule, this is equivalent to the \textit{associated differential equation}
\begin{equation}\label{eq:DE_2}
a'(s(t))\cdot s'(t)=n\,t^{n(1-h)-1}\,b(s(t)).
\end{equation}

In the following lemmas we analyze the solvability and other properties of solutions of the associated differential equation.

\begin{lemma}[Briot-Bouquet]\label{le:Briot-Bouquet}
	Let $g(t,z),f(t,z)\in \mathbb{L}[[t,z]]$, where $\mathbb{L}$ is a subfield of the complex numbers $\mathbb{C}$.
	Let us consider the differential equation
	\begin{equation} \label{eq:Briot-Bouquet}
	g\big(t,z(t)\big)\,\,t\,z'(t) = f(t,z(t))
	\end{equation}
	with $g(0,0)\neq 0$ and $f(0,0)=0$.
	Let us denote $$\lambda=\frac{1}{g(0,0)}\frac{\partial f}{\partial z}(0,0),$$ and let $\Sigma$ be the set of formal power series solutions of equation \eqref{eq:Briot-Bouquet} in $\mathbb{C}[[t]]$ of order greater or equal to one. 
	Then it holds that
	\begin{enumerate}
		\item If $\lambda$ is not a positive integer, then $\Sigma$ has exactly one element $z(t)=\sum_{i=1}^{\infty}\zeta_i\,t^{i}$ and all the coefficients $\zeta_i\in \mathbb{L}$.
		\item If $\lambda$ is a positive integer, then $\Sigma$ is either the empty set or a one-parameter family of the form $z(t)=\sum_{i=1}^{\infty}\zeta_i\,t^{i}$,
		where $\zeta_1,\ldots,\zeta_{\lambda-1}$ are uniquely determined elements in $\mathbb{L}$,
		$\zeta_\lambda\in \mathbb{C}$ is a free parameter and for $i>\lambda$ the coefficients $\zeta_i\in \mathbb{L}(\zeta_\lambda)$ are determined by $\zeta_\lambda$.
		Moreover, it can algorithmically be decided whether $\Sigma$ is empty or not and two elements in $\Sigma$ are equal if they coincide up to order $\lambda$.
	\end{enumerate}
	Furthermore, the following statements hold in both cases:
	\begin{enumerate}
		\item[(a)] If $g$ and $f$ are convergent power series, then any element of $\Sigma$ is convergent.
		\item[(b)] Let us write $f=\sum_{i+j \geq 1} f_{i,j}\,t^{i}\,z^{j}$, $g=\sum_{i+j \geq 0} g_{i,j}\,t^{i}\,z^{j}$, $f_{i,j},g_{i,j}\in \mathbb{L}$ and let	$z(t)=\sum_{i=1}^{\infty}\zeta_i\,t^{i}$ be an element of $\Sigma$. 
		For any $m\in \mathbb{Z}_{>0}$, the coefficient $\zeta_m$ is completely determined by the coefficients of $f_{i,j}$ and $g_{i,j}$ such that $i+j\leq m$ and, in case $\lambda \in \mathbb{Z}_{>0}$ and $m>\lambda$, $\zeta_m$ also depends on the coefficient $\zeta_\lambda$.
	\end{enumerate}
\end{lemma}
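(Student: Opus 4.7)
The plan is to substitute the ansatz $z(t)=\sum_{i\geq 1}\zeta_i t^{i}$ into \eqref{eq:Briot-Bouquet} and match coefficients of $t^m$ for each $m\geq 1$. Writing $g=\sum g_{i,j}t^{i}z^{j}$ (with $g_{0,0}\neq 0$) and $f=\sum_{i+j\geq 1} f_{i,j}t^{i}z^{j}$, the unknown $\zeta_m$ enters linearly on both sides: on the left it contributes $m\,g_{0,0}\,\zeta_m$ (from $g_{0,0}\cdot tz'(t)$), and on the right it contributes $f_{0,1}\,\zeta_m$ (from $f_{0,1}z(t)$). Every other contribution involves a monomial $\zeta_{i_1}\cdots\zeta_{i_k}$ with $i_1+\cdots+i_k<m$, so depends only on $\zeta_1,\ldots,\zeta_{m-1}$, and the coefficients $f_{i,j}, g_{i,j}$ that appear all satisfy $i+j\leq m$. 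This produces a recursion of the shape
\[
g_{0,0}\,(m-\lambda)\,\zeta_m \;=\; P_m(\zeta_1,\ldots,\zeta_{m-1}),
\]
with $P_m\in\mathbb{L}[\zeta_1,\ldots,\zeta_{m-1}]$ depending polynomially on those $f_{i,j}, g_{i,j}$ with $i+j\leq m$. Statement (b) is an immediate readoff of this recursion.

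The case dichotomy follows directly. If $\lambda\notin\mathbb{Z}_{>0}$, then $m-\lambda\neq 0$ for every $m\geq 1$, so each $\zeta_m\in\mathbb{L}$ is uniquely determined from the previous ones and $\Sigma$ has a unique element. If $\lambda=\ell\in\mathbb{Z}_{>0}$, then $\zeta_1,\ldots,\zeta_{\ell-1}\in\mathbb{L}$ are uniquely forced, while at $m=\ell$ the equation degenerates to $0=P_\ell(\zeta_1,\ldots,\zeta_{\ell-1})$. If this explicit polynomial condition fails, then $\Sigma=\emptyset$; otherwise $\zeta_\ell\in\mathbb{C}$ is a free parameter and each subsequent $\zeta_m\in\mathbb{L}(\zeta_\ell)$ for $m>\ell$ is uniquely determined because $m-\lambda\neq 0$. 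In particular, two elements of $\Sigma$ coinciding up to order $\ell$ are equal, and emptiness is decided by a single polynomial evaluation, hence algorithmically.

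For (a) I would apply the classical method of majorants. If $f,g$ converge on a polydisc of radius $\rho>0$ about the origin, Cauchy's inequalities give $|f_{i,j}|,|g_{i,j}|\leq M\rho^{-(i+j)}$ for some $M>0$, which allows one to introduce dominating series $\widetilde f,\widetilde g$ with nonnegative real coefficients and to solve a majorant equation of Briot-Bouquet type whose unique formal solution $\widetilde z(t)$ is analytic near the origin by the implicit function theorem (after rewriting as a fixed-point equation $\widetilde z=t\,\Phi(t,\widetilde z)$). A coefficient-wise comparison, using that $|m-\lambda|$ is uniformly bounded below by a positive constant on the set of $m\geq 1$ where the recursion is actually solved for $\zeta_m$ (i.e.\ $m\neq\lambda$), yields $|\zeta_m|\leq\widetilde\zeta_m$ and hence convergence of $z(t)$. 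The main obstacle is precisely this convergence step: the small denominators $g_{0,0}(m-\lambda)$ must be absorbed into the dominating series, which is the technical heart of the Briot-Bouquet argument; once that is set up, everything else is a direct reading of the recursion.
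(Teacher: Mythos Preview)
Your proposal is correct and follows essentially the same line as the paper's proof: substitute the ansatz, derive the linear recursion $(m-\lambda)\zeta_m = P_m(\zeta_1,\ldots,\zeta_{m-1})$, read off items (1), (2) and (b) from it, and appeal to the classical majorant method for (a). The only cosmetic differences are that the paper first normalizes to $g(0,0)=1$ and rearranges the equation as $t\,z'-\lambda z = f_{1,0}t+\tilde f(t,z)+t\,z'\,\tilde g(t,z)$ so that $P_m$ has nonnegative integer coefficients (which makes the majorant comparison immediate), and in the resonant case $\lambda\in\mathbb{Z}_{>0}$ it handles (a) by the substitution $z=\zeta_1 t+\cdots+\zeta_\lambda t^\lambda+t^\lambda w(t)$ to reduce to the nonresonant case, rather than arguing via a uniform lower bound on $|m-\lambda|$.
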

\begin{proof}
	These results are partly consequences of the Theorem XXVIII in Section 80 and Section 86 in \cite{BriotBouquet-Reserches}. 
	Nevertheless we will prove existence and uniqueness of the solutions independently in order to show precisely the dependency of the coefficients of the solutions with respect to the coefficients of $f(t,z)$ and $g(t,z)$.
	
	We may assume that $g(0,0)=1$ and $\frac{\partial\,f}{\partial z}(0,0)=\lambda$ by multiplying both sides of equation~\eqref{eq:Briot-Bouquet} with $1/g(0,0)$. 
	Let us write $g=1-\tilde{g}$ and $f=\lambda\,z+f_{1,0}\,t+\tilde{f}$, where $\tilde{g}=\sum_{i+j\geq 1}g_{i,j}\,t^{i}\,z^{j}$ and $\tilde{f}=\sum_{i+j\geq2}f_{i,j}\,t^{i}\,z^{j}$. 
	Then, the differential equation~\eqref{eq:Briot-Bouquet} is equivalent to the following one
	\begin{equation} \label{eq:Briot-Bouquet-simplify}
	t\, z'(t)-\lambda \,z=f_{1,0}\,t+\tilde{f}(t,z)+ t\,z'(t) \,\tilde{g}(t,z). 
	\end{equation}
	
	Let us substitute $z(t)$ for an arbitrary power series $\sum_{i=1}^{\infty}\zeta_i\,t^{i}$ into the equation~\eqref{eq:Briot-Bouquet-simplify}. 
	For $m \in \mathbb{Z}_{>0}$ let us compute the coefficient of $t^{m}$ in both sides of the resulting expression. 
	On the left hand side we obtain $(m-\lambda )\zeta_m$. 
	By expanding the right hand side of~\eqref{eq:Briot-Bouquet-simplify}, we obtain a polynomial expression in $t,\zeta_i, f_{i,j}$ and $g_{i,j}$ with non-negative integer coefficients. 
	More precisely, since $\ord_t \,z(t) \geq 1$, the coefficient of $t^{m}$ in $t\,z'(t)\,\tilde{g}(t,z(t))$ is a polynomial expression in $\zeta_{i}$, $1 \leq i \leq m-1$, and $g_{i,j}$ with $1 \leq i+j \leq m-1$.  
	The coefficient of $t^m$ in $\tilde{f}(t,z(t))$ is a polynomial expression in $\zeta_i$, $1 \leq i \leq m-1$ and $f_{i,j}$ for $2 \leq i+j \leq m-1$. 
	Let us define $P_m$ as the coefficient of $t^m$ of the right hand side of~\eqref{eq:Briot-Bouquet-simplify}. 
	Then $z(t)$ is a solution of equation~\eqref{eq:Briot-Bouquet-simplify} if and only if for every $m \in \mathbb{Z}_{>0}$ the following relations hold
	\begin{equation} \label{eq:Brior-Bouquer_recursive_equation}
	(m-\lambda)\,\zeta_m=P_m({\zeta}_1,\ldots,{\zeta}_{m-1},{f}_{1,0},g_{1,0},g_{0,1},{f}_{i,j},{g}_{i,j};~2 \leq i+j\leq m-1).
	\end{equation}
	If $\lambda\not\in\mathbb{Z}_{>0}$, equation~\eqref{eq:Brior-Bouquer_recursive_equation} can be solved uniquely for every $\zeta_m$, $m\geq 1$. 
	If $\lambda\in \mathbb{Z}_{>0}$, then $\zeta_1,\ldots,\zeta_{\lambda-1}$ satisfying equations~\eqref{eq:Brior-Bouquer_recursive_equation} are uniquely determined as well. 
	If equation~\eqref{eq:Briot-Bouquet-simplify} has at least one formal power series solution of order greater than or equal to one, 
	then necessarily $P_{\lambda}(\zeta_1,\ldots,\zeta_{\lambda-1},f_{i,j},g_{i,j})=0$ and arbitrary $\zeta_{\lambda}$ satisfies	equation~\eqref{eq:Brior-Bouquer_recursive_equation}. 
	Once that $\zeta_\lambda$ has been chosen, there exists a unique sequence of $\zeta_{i}$, $i>\lambda$, solving the equation~\eqref{eq:Brior-Bouquer_recursive_equation} for $m>\lambda$. 
	Since equation~\eqref{eq:Brior-Bouquer_recursive_equation} is linear in $\zeta_m$ and $P_m$ is a polynomial expression, no field extensions are necessary. 
	
	In order to show item (a), let $f$ and $g$ be convergent power series in a neighborhood of the origin. 
	In the case $\lambda\not\in \mathbb{Z}_{>0}$ the convergence of the solution follows by the majorant series method using the fact that the coefficients of $P_m$ are non negative (see for instance Section 12.6 in \cite{Ince1926}). 
	In the case $\lambda\in \mathbb{Z}_{>0}$ we perform for a solution $z(t)=\sum \zeta_i\,t^{i}$ the change of variables $z(t)=\zeta_1t+\cdots+\zeta_{\lambda}t^{\lambda}+t^{\lambda}\,w(t)$ and reduce it to the previous case, see for instance Section 86 of \cite{BriotBouquet-Reserches}.
\end{proof}

\begin{lemma}\label{le:existence_and_convegence_of_reparametriztion}
	Let $\qq=(y_0,p_0) \in \C \times \C_{\infty}$ and $\mathbb{L}$ be a subfield of $\C$.
	Let $\cP=[(a(t),b(t))] \in \Places$ with $a(t),b(t) \in \mathbb{L}((t))$ be such that equation~\eqref{eq:nec_eq_order_ramification} holds for an $n \in \mathbb{Z}_{>0}$, i.e. $n=\frac{k-r}{1-h}\geq 1$ with $k=\ord_t(a(t)-y_0),~r=\ord_t(b(t))$.
	Let $\Sigma$ be the set of formal power series solutions of the associated differential equation~\eqref{eq:DE_2} in $\mathbb{C}[[t]]$ of order one. 
	Then, it holds that
	\begin{enumerate}
		\item If $h\leq 0$, then $\Sigma$ consists of exactly $n\,(1-h)$ elements of the form $s(t)=\sum_{i=1}^{\infty} \sigma_i\,t^i$, where $\sigma_1^{n(1-h)} \in \mathbb{L}$ and all the other coefficients $\sigma_i \in \mathbb{L}(\sigma_1)$.
		\item If $h\geq 2$, then $\Sigma$ is either the empty set or consists of up to $n\,(h-1)$ one-parameter families of the form $s(t)=\sum_{i=1}^{\infty} \sigma_i\,t^i$, where $\sigma_1^{n(h-1)} \in \mathbb{L}$, $\sigma_2,\ldots,\sigma_{r-k-1}$ are uniquely determined elements in $\mathbb{L}(\sigma_1)$, $\sigma_{r-k} \in \C$ is a free parameter and for $i>r-k$ the coefficients $\sigma_{i} \in \mathbb{L}(\sigma_1,\sigma_{r-k})$ are determined by $\sigma_1$ and $\sigma_{r-k}$.
	\end{enumerate}
	Moreover, the following statements hold in both cases
	\begin{enumerate}
		\item[(a)] If $a(t)$ and $b(t)$ are convergent as Puiseux series, then any element in $\Sigma$ is convergent.
		\item[(b)] Let us write $a(t)=y_0+\sum_{i \geq 0} a_i\,t^{k+i},~b(t)=\sum_{i \geq 0}b_i\,t^{r+i}$ with $a_i,b_i \in \mathbb{L}$.
		Then for any $m \in \mathbb{Z}_{>1}$, the coefficient $\sigma_m$ is completely determined by the coefficients $\sigma_1,a_i$ and $b_i$ such that $0 \leq i \leq m-1$ and, in case $r-k \in \mathbb{Z}_{>0}$ and $m>r-k$, $\sigma_m$ also depends on the coefficient $\sigma_{r-k}$.
	\end{enumerate}
\end{lemma}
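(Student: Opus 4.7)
The plan is to recast the associated differential equation~\eqref{eq:DE_2} into the form treated by the Briot--Bouquet lemma (Lemma~\ref{le:Briot-Bouquet}) via a change of dependent variable. I start by factoring out the leading monomials: write $a'(u) = u^{k-1}\alpha(u)$ with $\alpha(u) = \sum_{i\geq 0}(k+i)a_i u^i$, $\alpha(0) = ka_0 \neq 0$, and $b(u) = u^r\beta(u)$ with $\beta(0) = b_0 \neq 0$. Substituting $s(t) = t\phi(t)$ into~\eqref{eq:DE_2} and using $n(1-h) = k-r$, both sides are divisible by $t^{k-1}$, and after cancellation the equation becomes
$$\phi^{k-1}\alpha(t\phi)\,t\phi'(t) \;=\; n\phi^r\beta(t\phi) - \phi^k\alpha(t\phi).$$

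Next I would extract $\sigma_1 := \phi(0)$ by requiring the right-hand side to vanish at $t=0$. This forces $\sigma_1^{k-r} = nb_0/(ka_0)$, producing $|k-r| = n|1-h|$ admissible values, each of which satisfies $\sigma_1^{n|1-h|} \in \mathbb{L}$. After fixing such a $\sigma_1$, I perform the shift $\phi = \sigma_1 + w$ with $w(0)=0$, which puts the equation in Briot--Bouquet form
$$G(t,w)\, tw'(t) = F(t,w),$$
where $G,F \in \mathbb{L}(\sigma_1)[[t,w]]$, $G(0,0) = \sigma_1^{k-1}ka_0 \neq 0$, and $F(0,0) = 0$ by construction. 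A direct computation of $\partial_w F(0,0)$, simplified using the relation $\sigma_1^{k-r}ka_0 = nb_0$, yields
$$\lambda = \frac{\partial_w F(0,0)}{G(0,0)} = r-k = n(h-1).$$

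Now I would invoke Lemma~\ref{le:Briot-Bouquet}. If $h\leq 0$, then $\lambda = -n(1-h) < 0$ is not a positive integer, so for each admissible $\sigma_1$ there is a unique $w\in\C[[t]]$ of order $\geq 1$, producing exactly $n(1-h)$ elements in $\Sigma$. If $h\geq 2$, then $\lambda = n(h-1)$ is a positive integer, and for each admissible $\sigma_1$ Briot--Bouquet yields either the empty set or a one-parameter family, accounting for the up-to $n(h-1)$ families in the statement. Translating the coefficients of $w$ back via $s(t) = \sigma_1 t + tw(t)$ then gives the claimed structure of the $\sigma_i$.

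Finally, statements (a) and (b) are inherited from the corresponding items of the Briot--Bouquet lemma. Convergence of $a,b$ as Puiseux series implies convergence of $\alpha,\beta$ and hence of $F$ and $G$, so (a) of Lemma~\ref{le:Briot-Bouquet} gives convergence of $w$ and therefore of $s$. For (b), the coefficient of $t^iw^j$ in $F$ and $G$ involves only $a_\ell, b_\ell$ with $\ell \leq i$, so item (b) of Briot--Bouquet transfers directly into the asserted dependency of $\sigma_m$ on $\sigma_1$ and the $a_i, b_i$ with $i \leq m-1$ (and on the free parameter in the resonant case). I expect the main obstacle to be the simplification of $\lambda$ to $r-k$, which hinges on inserting the leading-order relation for $\sigma_1$ at exactly the right step; everything else is essentially bookkeeping.
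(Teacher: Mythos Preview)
Your proposal is correct and follows essentially the same route as the paper: substitute $s(t)=t(\sigma_1+w(t))$, cancel the common power of $t$ using $n(1-h)=k-r$, determine $\sigma_1$ from the constant term, and then apply the Briot--Bouquet lemma with $\lambda=r-k$. The only cosmetic difference is that the paper normalizes separately in the two cases (dividing by $s(t)^{-r}$ when $h\le 0$ and by $s(t)^{-k+1}$ when $h\ge 2$) before substituting, whereas you carry the factors $\phi^{k-1}$ and $\phi^{r}$ through a single unified equation; the resulting $G,F$ and the value of $\lambda$ coincide.
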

\begin{proof}
	Let us define $\nu=|n\,(1-h)|=|k-r| \geq 1$.
	
	First we prove the case $h \leq 0$.
	Multiplying both sides of \eqref{eq:DE_2} by $s(t)^{-r}\,t^{-\nu+1}$, we obtain the equivalent differential equation
	\begin{equation}\label{eq:DE_3}
	t^{-\nu+1}\,\tilde{a}(s(t))\cdot s'(t)=n\,\tilde{b}(s(t)),
	\end{equation}
	where $\tilde{a}(s)=s^{-r}\,a'(s)=\sum_{i \geq 0}\tilde{a}_{i}\,s^{i+\nu-1}$ with $\tilde{a}_i=(k+i)\,a_{i},~\tilde{a}_{0}=k\,a_k \neq 0$ and $\tilde{b}(s)=s^{-r}\,b(s)=\sum_{i \geq 0}b_i\,s^{i}$. 
	Let us fix a non-zero $\sigma\in \mathbb{C}$ and perform the change of variables $s(t)=t\,(\sigma+z(t))$ in the differential equation~\eqref{eq:DE_3} to obtain
	\begin{equation*}
	\sum_{i \geq 0}\tilde{a}_i\,t^{i}\,(\sigma+z(t))^{i+\nu-1}\,(\sigma+z(t)+t\,z'(t)) =n\,\sum_{i \geq 0}b_i\,t^{i}\,(\sigma+z(t))^{i}.
	\end{equation*}
	Let us move the terms of the left hand side not involving $z'$ to the right hand side to obtain
	\begin{equation} \label{eq:DE_BriotBouquet}
	\left( \sum_{i \geq 0}\tilde{a}_{i}\,t^{i}\,(\sigma+z(t))^{i+\nu-1}\right)\,t\,z'(t)= \sum_{i \geq 0}t^{i}\,\Big( n\,b_i\,(\sigma+z(t))^{i}-\tilde{a}_{i}\,(\sigma+z(t))^{i+\nu}\Big).
	\end{equation}
	Hence, a formal (respectively convergent) power series $s(t)=\sum_{i=1}^{\infty} \sigma_i\,t^{i}$ is a solution of~\eqref{eq:DE_2} if and only if $z(t)=\sum^{\infty}_{i=2}\sigma_{i}t^{i-1}$ is a formal (respectively convergent) power series solution of~\eqref{eq:DE_BriotBouquet} for $\sigma=\sigma_1$.
	
	By considering in equation~\eqref{eq:DE_BriotBouquet} the terms independent of $t$, we obtain $0=n\,b_0-\tilde{a}_{0}\,\sigma^{\nu}$. 
	As a consequence, if $s(t)=\sum_{i=1}^{\infty}\sigma_i\,t^{i}$ is a formal power solution of equation~\eqref{eq:DE_2}, then $\sigma_1^{\nu}=\frac{n\,b_0}{\tilde{a}_{0}}$. 
	Since $\tilde{a}_{0},~b_0 \neq 0$, there are exactly $\nu$ possibilities for $\sigma_1$. 
	It remains to prove that for any such $\sigma_1=\sigma$, there exists a unique formal power series solution $z(t)$ of~\eqref{eq:DE_BriotBouquet} with $\ord_t z(t)\geq 1$ satisfying the properties specified in the statement of the lemma. 
	As described below, this is a direct consequence of Lemma~\ref{le:Briot-Bouquet} applied to the differential equation~\eqref{eq:DE_BriotBouquet}.
	
	First, let us show that~\eqref{eq:DE_BriotBouquet} satisfies the hypothesis of the first case of
	Lemma~\ref{le:Briot-Bouquet}. 
	Let us denote by $f(t,z(t))$ the right hand side of equation~\eqref{eq:DE_BriotBouquet} and by $g(t,z(t))\,t\,z'(t)$ the left hand side. 
	We have that $g(0,0)=\tilde{a}_{0}\,\sigma_1^{\nu-1} \neq 0,~f(0,0)=n\,b_0-\tilde{a}_{0}\sigma_1^{\nu}=0$ and $\frac{\partial\,f}{\partial z}(0,0)=-\nu\,\tilde{a}_{0}\sigma_1^{\nu-1}$. 
	Hence $\frac{1}{g(0,0)}\frac{\partial\,f}{\partial z}(0,0)=-\nu=-n\,(1-h)<0$. 
	Then, by Lemma~\ref{le:Briot-Bouquet}, the following statements hold. 
	There exists a unique formal power series solution $z(t)=\sum_{i=2}\sigma_it^{i-1}$ of~\eqref{eq:DE_BriotBouquet} and consequently, $s(t)=\sum_{i=1}\sigma_i\,t^{i}$ is a solution of~\eqref{eq:DE_2}.
	If $a(s)$ and $b(s)$ are convergent, the series $f(t,z)$ and $g(t,z)$ are convergent and then
	$z(t)$ and $s(t)$ are convergent. 
	Moreover, the coefficients of $f(t,z),g(t,z)$ and therefore of $z(t)$ and $s(t)$ belong to the field $\mathbb{L}(\sigma_1)$.
	
	It remains to prove item (b) for the case of $h \leq 0$. 
	Since $\nu-1 \geq 0$, the coefficient $g_{i,j}$ in $g(t,z)=\sum_{i+j \geq 0} g_{i,j}\,t^{i}\,z^{j}$ depends only on $\sigma_1$ and $\tilde{a}_{i}$. 
	Similarly, the coefficient $f_{i,j}$ in $f(t,z)=\sum_{i+j \geq 1} f_{i,j}\,t^{i}\,z^{j}$ depends only on $\sigma_1,~\tilde{a}_{i}$ and $b_i$. 
	For $m \in \mathbb{Z}_{>0}$, by Lemma~\ref{le:Briot-Bouquet}, $\sigma_m$ depends only on $f_{i,j}$ and $g_{i,j}$ with $i+j\leq m-1$, which in their turn depend on $\sigma_1$ and $\tilde{a}_{i}$ and $b_i$ for $0\leq i \leq m-1$.
	Since $\tilde{a}_{i}=(k+i)\,a_i$, item (b) is proven.

	Let us now consider the case $h \geq 2$. 
	Multiplying both sides of~\eqref{eq:DE_2} by $s(t)^{-r}$, we obtain the equivalent differential equation
	\begin{equation} \label{eq:Lemma8_hpositive}
	\tilde{a}(s(t))\,s'(t)=n\,t^{\nu+1}\,\tilde{b}(s(t)),
	\end{equation}
	where $\tilde{a}(s)=s^{-k+1}\,a'(s)=\sum_{i \geq 0}\tilde{a}_i\,s^{i}$ with $\tilde{a}_i=(k+i)\,a_{i},~\tilde{a}_0=k\,a_0 \neq 0$ and $\tilde{b}(s)=s^{-k+1}\,b(s)=\sum_{i \geq 0}b_i\,s^{i+\nu+1}$. 
	After performing for $\sigma \in \C$ the change of variable $s(t)=t\,(\sigma+z(t))$ in~\eqref{eq:Lemma8_hpositive}, we obtain the equivalent differential equation
	\begin{equation} \label{eq:DE_BriotBouquet-h-positive}
	\left(\sum_{i \geq 0}\tilde{a}_i\,t^{i}\,(\sigma+z(t))^{i}\right)\,t\,z'(t)=\sum_{i \geq 0}t^{i}\,\Big(n\,b_{i}\,(\sigma+z(t))^{i+\nu+1}-\tilde{a}_i\,(\sigma+z(t))^{i+1}\Big).
	\end{equation}
	By considering in equation \eqref{eq:DE_BriotBouquet-h-positive} the terms independent of $t$, we again obtain $\sigma_1^{\nu}=\frac{\tilde{a}_0}{n\,b_0}$ as necessary condition for a solution $s(t)=\sum_{i=1}^{\infty} \sigma_i\,t^i$ of~\eqref{eq:DE_2}. 
	For $\sigma=\sigma_1$, by setting $f(t,z)$ equal to the right hand side and $g(t,z)\,t\,z'$ equal to the left hand side in equation~\eqref{eq:DE_BriotBouquet-h-positive}, the hypothesis of the second case of Lemma~\ref{le:Briot-Bouquet} are fulfilled: $g(0,0)=\tilde{a}_0 \neq 0,~f(0,0)=0,~\frac{\partial\,f}{\partial z}(0,0)=n\,(\nu+1)\,b_{0}\,\sigma_1^{\nu}-\tilde{a}_{0}$ and $\lambda=\frac{1}{g(0,0)}\frac{\partial\,f}{\partial z}(0,0)=\nu\in \mathbb{Z}_{>0}$. 
	As a consequence of Lemma~\ref{le:Briot-Bouquet} and the equivalence between equations~\eqref{eq:DE_2} and~\eqref{eq:DE_BriotBouquet-h-positive}, we obtain that for every $\sigma_1$ with $\sigma_1^{\nu}=\frac{\tilde{a}_0}{n\,b_0}$ equation~\eqref{eq:DE_2} either has no formal power series solutions of order one or it has a one-parametric family. 
	In the affirmative case, two of the solutions are equal if they coincide up to order $\nu+1$. 
	The others properties of these solutions are proven analogously as in the preceding case.
\end{proof}

Now, in the case of non-positive $h$, we are in the position to decide whether a given place $\cP\in \Places$ is a solution place by a simple order comparison.

\begin{theorem}\label{theorem:characterizationSolPlace}
	Let $\cP=[(a(t),b(t))]\in \Places$ and $h \leq 0$.
	Then $\cP$ is a solution place if and only if equation \eqref{eq:nec_eq_order_ramification} holds for an $n \in \mathbb{Z}_{>0}$.
	In the affirmative case the ramification order of $\cP$ is equal to $n$.
\end{theorem}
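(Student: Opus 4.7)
The proof is an if-and-only-if, and the two directions use different tools from the section. The forward implication falls straight out of Lemma~\ref{lemma:nec_cond_DE}; the content lies in the backward implication, where the strategy is to use Lemma~\ref{le:existence_and_convegence_of_reparametriztion} (whose conclusion for $h\leq 0$ is unconditional, never vacuous) to produce a reparametrization of $(a(t),b(t))$ that satisfies equation~\eqref{eq:necessary_DE}, and then invoke Proposition~\ref{Theorem:Characterization Sol Param}.

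For the forward direction, suppose $\cP$ is a solution place. By definition there exists $y(x)\in\Sol$ with $\delta(y(x))=\cP$, and $\Delta(y(x))$ is a representative of $\cP$. Lemma~\ref{lemma:nec_cond_DE} yields~\eqref{eq:nec_eq_order_ramification} for $\Delta(y(x))$ with $n$ equal to the ramification order of $y(x)$. Because the orders $\ord_t(a(t)-y_0)$ and $\ord_t(b(t))$ are place invariants (see the end of Section~\ref{sec:preliminaries}), the same equation holds for the chosen representative $(a(t),b(t))$; together with Lemma~\ref{lemma:ram order solution place} this also shows that $n$ coincides with the ramification order of $\cP$.

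For the backward direction, assume~\eqref{eq:nec_eq_order_ramification} holds for some $n\in\mathbb{Z}_{>0}$. Since $h\leq 0$, case~(1) of Lemma~\ref{le:existence_and_convegence_of_reparametriztion} applies and yields a formal power series $s(t)$ of order one solving the associated differential equation~\eqref{eq:DE_2}. Setting $(\tilde a(t),\tilde b(t))=(a(s(t)),b(s(t)))$ gives another representative of $\cP$ that, by the way~\eqref{eq:DE_2} was derived, satisfies equation~\eqref{eq:necessary_DE} with the same integer $n$. Proposition~\ref{Theorem:Characterization Sol Param} then concludes that $(\tilde a,\tilde b)$ is a solution parametrization of ramification order $n$; hence there exists $y(x)\in\Sol$ with $\Delta(y(x))=(\tilde a,\tilde b)$, whence $\delta(y(x))=\cP$ and $\cP$ has ramification order $n$.

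The genuine obstacle, and the reason the statement is restricted to $h\leq 0$, is the existence of the reparametrization $s(t)$. For $h\leq 0$ Lemma~\ref{le:existence_and_convegence_of_reparametriztion}(1) furnishes $s(t)$ from the order condition alone, so~\eqref{eq:nec_eq_order_ramification} is already sufficient; for $h\geq 2$ the same lemma only gives conditional existence (the set $\Sigma$ can be empty), which is why the case at infinity must be treated separately later and a stronger hypothesis on $\cP$ is then required.
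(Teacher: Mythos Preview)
Your proof is correct and follows essentially the same approach as the paper's: the forward direction via Lemma~\ref{lemma:nec_cond_DE}, and the backward direction by invoking Lemma~\ref{le:existence_and_convegence_of_reparametriztion}(1) to obtain the reparametrization $s(t)$ and then Proposition~\ref{Theorem:Characterization Sol Param} to conclude. Your additional remarks on place invariance of the orders and on why the argument breaks for $h\geq 2$ are accurate and make the write-up slightly more explicit than the paper's version.
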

\begin{proof}
	The first direction directly follows from Lemma \ref{lemma:nec_cond_DE}.
	For the other direction, let $(a(t),b(t))$ and $n \in \mathbb{Z}_{>0}$ be such that equation \eqref{eq:nec_eq_order_ramification} holds.
	Moreover, let $s(t) \in \C[[t]]$ with $\ord_t(s(t))=1$ be such that the associated differential equation \eqref{eq:DE_2} is fulfilled. 
	By Lemma \ref{le:existence_and_convegence_of_reparametriztion}, such a solution exists. 
	Then $(\bar{a}(t),\bar{b}(t))=(a(s(t)),b(s(t))$ fulfills equation \eqref{eq:necessary_DE} and by Proposition \ref{Theorem:Characterization Sol Param}, $(\bar{a}(t),\bar{b}(t))$ is a solution parametrization with ramification order equal to $n$.
\end{proof}

\begin{theorem}
	Any formal Puiseux series solution of \eqref{eq-main}, expanded around a finite point or at infinity, is convergent.
\end{theorem}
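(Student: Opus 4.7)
My plan is to handle both cases uniformly through equation \eqref{eq-infinity}, treating formal Puiseux series solutions expanded at a finite point via $h=0$ (after, if needed, a translation of the independent variable) and those expanded at infinity via the substitution $x=1/z$, which produces an equation of the same type with $h=2$. It therefore suffices to show that every $y(x) \in \Sol$ is convergent as a Puiseux series, for both $h=0$ and $h=2$.

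Given such a $y(x)$ of ramification order $n$, I would form $(a(t),b(t)) := \Delta(y(x))$, which by definition is an irreducible formal parametrization of $\cc$ centered at $\qq$ and, by Proposition~\ref{Theorem:Characterization Sol Param}, satisfies the identity \eqref{eq:necessary_DE}. The key external input is the classical Puiseux theorem for algebraic curves (see, e.g., \cite{walker1950algebraic}): every place of $\cc$ admits an irreducible representative consisting of convergent Puiseux series. Pick such a convergent representative $(a^{*}(t),b^{*}(t))$ of the place $\delta(y(x))=[(a,b)]$. Since equivalent irreducible formal parametrizations differ by an order-one formal reparametrization, there exists $s(t) \in \C[[t]]$ with $\ord_t(s(t))=1$ such that $a(t)=a^{*}(s(t))$ and $b(t)=b^{*}(s(t))$. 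Because the orders in \eqref{eq:nec_eq_order_ramification} depend only on the underlying place, $(a^{*},b^{*})$ also satisfies \eqref{eq:nec_eq_order_ramification} with the same $n$.

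Differentiating $a(t)=a^{*}(s(t))$ via the chain rule and comparing with \eqref{eq:necessary_DE} exhibits $s(t)$ as a formal power series solution of order one of the associated differential equation \eqref{eq:DE_2} built from $(a^{*},b^{*})$. By Lemma~\ref{le:existence_and_convegence_of_reparametriztion}(a), every such formal solution is convergent whenever the underlying parametrization is convergent. Hence $s(t)$ is convergent, and therefore $a(t)=a^{*}(s(t))$ is convergent as a composition of convergent power series vanishing at the origin. Since $y(x)=a(x^{1/n})$, we obtain the convergence of $y(x)$ as a Puiseux series.

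The main obstacle I anticipate is conceptual rather than computational: one must verify that the reparametrization $s(t)$ connecting an arbitrary formal representative of the place to a convergent one genuinely lies in the solution set of \eqref{eq:DE_2}, and that the hypothesis of Lemma~\ref{le:existence_and_convegence_of_reparametriztion} transfers from $(a,b)$ to $(a^{*},b^{*})$. Once these place-invariance observations are in place, Lemma~\ref{le:existence_and_convegence_of_reparametriztion}(a) supplies convergence uniformly in both the $h\leq 0$ and $h\geq 2$ regimes, so no further case split is required.
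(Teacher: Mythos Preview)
Your argument is correct and follows essentially the same route as the paper's proof. The only cosmetic difference is that the paper passes specifically to the classical Puiseux parametrization $(y_0+t^{k},\bar b(t))$ of the place (so that Puiseux's theorem gives convergence of $\bar b$ directly), whereas you invoke the existence of an arbitrary convergent representative $(a^{*},b^{*})$; in both cases the reparametrization linking it to $\Delta(y)$ is identified as an order-one solution of \eqref{eq:DE_2} and Lemma~\ref{le:existence_and_convegence_of_reparametriztion}(a) yields its convergence.
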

\begin{proof}
	In order to prove the statement we show that every formal Puiseux series solution of equation \eqref{eq-infinity}, in particular for $h \in \{0,2\}$, expanded around zero is convergent.
	Let $y(x) \in \Sol$.
	Performing the change of variable $\tilde{y}(x)=1/y(x)$ if necessary, we can assume that $y_0 \in \C$.
	Let $n \in \mathbb{Z}_{>0}$ be the ramification order of $y(x)$ and $\Delta(y(x))=(a(t),b(t))$.
	By Lemma \ref{lemma:nec_cond_DE}, equations \eqref{eq:necessary_DE} and \eqref{eq:nec_eq_order_ramification} hold.
	Let $k=\ord_t(a(t)-y_0)\geq 1$.
	By Section 2 of Chapter IV in \cite{walker1950algebraic}, there exists a formal power series $s(t) \in \C[[t]]$, with $\ord_t(s(t))=1$, such that $$a(s(t))-y_0=t^k.$$
	Let $\bar{a}(t)=a(s(t))$ and $\bar{b}(t)=b(s(t))$.
	Then $(\bar{a}(t)-y_0,\bar{b}({t}))=(t^k,\bar{b}(t))$ is a local parametrization of the non-trivial algebraic curve defined by $F(y-y_0,p)$.
	Hence, by Puiseux's theorem, $\bar{b}(t)$ is convergent.
	
	Let $r(t)$ be the compositional inverse of $s(t)$, i.e. $r(s(t))=t=s(r(t)).$
	Then $r(t)$ is a formal power series of order one and $a(t)=\bar{a}(r(t)), b(t)=\bar{b}(r(t))$.
	Since equation \eqref{eq:DE_2} holds for $(\bar{a}(t),\bar{b}(t))$ and $r(t)$, by Lemma \ref{le:existence_and_convegence_of_reparametriztion}, $r(t)$ is convergent.
	This implies that $a(t)$ is convergent and therefore, $y(x)=a(x^{1/n})$ is convergent as a Puiseux series.
\end{proof}

\begin{theorem} \label{tm:existence}
	Let $F(y,p)$ be a non-constant polynomial with no factor in $\C[y]$ or $\C[p]$.
	For any point in the plane $(x_0,y_0)\in \C^2$, there exists an analytic solution $y(x)$ of $F(y,y')=0$ such that $y(x_0)=y_0$.
\end{theorem}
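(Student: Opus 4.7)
The plan is to reduce the problem to producing a formal Puiseux series solution $y(x)$ with $y(0)=y_0$, and then invoke the convergence theorem proved just above in order to obtain analyticity. Since \eqref{eq-main} is autonomous, translating the independent variable lets me assume $x_0=0$, so I may work with \eqref{eq-infinity} in the case $h=0$.

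I would first dispose of the trivial case $F(y_0,0)=0$ by taking the constant solution $y(x)\equiv y_0$, which is certainly analytic. From now on assume $F(y_0,0)\neq 0$. Since $F$ has no factor in $\C[y]$, the polynomial $F(y_0,p)$ is not the zero polynomial in $p$. If it has a root in $\C$, that root is automatically nonzero, giving some $p_0\in\C\setminus\{0\}$ with $(y_0,p_0)\in\cc$. Otherwise $F(y_0,p)$ is a nonzero constant as a polynomial in $p$, which forces every non-constant coefficient of $F$ with respect to $p$ to vanish at $y_0$; in particular the leading one vanishes, and passing to the chart $p=1/q$ of $\C_{\infty}^{2}$ puts $(y_0,\infty)$ on $\cc$, so I set $p_0=\infty$. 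In either case I obtain $\qq=(y_0,p_0)\in\cc$ with $p_0\neq 0$, and I choose a place $\cP\in\Places$ represented by an irreducible parametrization $(a(t),b(t))$.

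The main check is the order condition \eqref{eq:nec_eq_order_ramification} for $h=0$. Because $F$ has no factor in $\C[y]$, the line $y=y_0$ is not a component of $\cc$, so $a(t)$ is non-constant and $k:=\ord_t(a(t)-y_0)\geq 1$. Because $p_0\neq 0$, the order $r:=\ord_t(b(t))$ is non-positive: it is $0$ when $p_0\in\C\setminus\{0\}$, and strictly negative when $p_0=\infty$, since then $b$ has a pole at $t=0$. Consequently $n:=k-r$ is a positive integer satisfying $n(1-h)=k-r$, i.e.\ \eqref{eq:nec_eq_order_ramification} holds. Theorem~\ref{theorem:characterizationSolPlace} then certifies $\cP$ as a solution place, producing some $y(x)\in\Sol$ with $y(0)=y_0$, and the preceding convergence theorem upgrades $y(x)$ to a function analytic on a neighborhood of $0$.

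The one genuinely delicate step is the case split on $p_0$: it is tempting to always pick $p_0\in\C$ as a root of $F(y_0,p)$, but this fails precisely when $F(y_0,p)$ has no finite roots, and one must then recognize geometrically that the vanishing of the top $p$-coefficient places a point of $\cc$ at $(y_0,\infty)$ and yields a place there to which Theorem~\ref{theorem:characterizationSolPlace} still applies. Once $p_0$ is chosen correctly, the inequality $k-r\geq 1$ is automatic from the two non-factor hypotheses on $F$ together with $p_0\neq 0$.
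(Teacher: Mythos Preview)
Your argument is correct and follows the same overall strategy as the paper: reduce to $x_0=0$, dispose of the constant case, exhibit a place at some $\qq=(y_0,p_0)$ for which the order condition~\eqref{eq:nec_eq_order_ramification} holds, apply Theorem~\ref{theorem:characterizationSolPlace}, and finish with the convergence theorem.

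The one genuine difference is how the required place is produced. The paper translates to $y_0=0$, observes that $F_{0,0}\neq 0$ makes $(0,0)$ a vertex of the Newton polygon $\mathcal{N}(F)$ of the algebraic curve, and uses a side of $\mathcal{N}(F)$ (which necessarily has non-negative slope) together with Puiseux's theorem to write down an explicit parametrization $(t^n,\sum_{i\ge k}c_i t^i)$ with $k\le 0$, from which $n-k\ge 1$ is immediate. You instead locate a point $(y_0,p_0)\in\cc$ with $p_0\neq 0$ by elementary algebra on the one-variable polynomial $F(y_0,p)$ (handling the degenerate case via $(y_0,\infty)$), invoke the existence of a place at that point, and read off $k\ge 1$, $r\le 0$ directly from $p_0\neq 0$ and the no-factor hypotheses. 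Your route avoids the Newton polygon machinery and is a bit more streamlined; the paper's route is more constructive in that it singles out a specific branch. Both land on exactly the same invocation of Theorem~\ref{theorem:characterizationSolPlace}.
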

\begin{proof}
	It is sufficient to prove the existence of a convergent formal Puiseux series solution $y(x)=y_0+\sum_{i \geq 1} c_i\,(x-x_0)^{i/n}$.
	Performing the change of variable $\bar{x}=x-x_0$ and $\bar{y}=y-y_0$, we may assume that $x_0=0$ and $y_0=0$.
	
	Let us write $F(y,p)=\sum F_{i,j}\,y^{i}\,p^{j}$.
	If $F(0,0)=F_{0,0}=0$, then we have that $y(x)=0$ is a solution of $F(y,y')=0$ and $\Delta(y)$ passes through $(0,0)$.
	We may assume that $F_{0,0}\neq 0$. Consider $\mathcal{N}(F)$ the Newton polygon of the algebraic curve $F(y,p)=0$ in the variables $y$ and $p$.
	The point $(0,0)$ is a vertex of $\mathcal{N}(F)$, because $F_{0,0}\neq 0$.
	This implies that all the sides of $\mathcal{N}(F)$ have slope greater or equal to zero (see Figure~\ref{fig:2}).
	
	\begin{figure}[H]
		\centering
		\def\varRightEnd{5}
\def\varLeftEnd{-1}
\def\varTopEnd{5}
\def\varBottomEnd{0}
\def\varMargen{0.4}
\def\varRadius{0.1}

\begin{tikzpicture}[scale=0.7]
  \path[fill=yellow] ({\varRightEnd+\varMargen},0) -- (0,0) -- (0,2) -- (1,4) --
  (3,5) -- ({\varRightEnd+\varMargen},5)-- ({\varRightEnd+\varMargen},0);
  \draw [thick]  (0,0) -- (0,2) -- (1,4) -- (3,5)--({5+\varMargen},5);
  \draw[help lines, dotted] [ultra thin]
  ({\varLeftEnd-\varMargen},{\varBottomEnd-\varMargen}) grid  
  ({\varRightEnd+\varMargen},{\varTopEnd+\varMargen});
  \draw[->] [thick] ({\varLeftEnd-\varMargen},0) --
  ({\varRightEnd+\varMargen},0);
  \draw[->] [thick] (0,{\varBottomEnd-\varMargen}) -- (0,{\varTopEnd+\varMargen});
  \draw[thick, fill=lightgray] (0,0) circle [radius={\varRadius}];
  \draw[thick, fill=lightgray] (0,2) circle [radius={\varRadius}];
  \draw[thick, fill=lightgray] (1,4) circle [radius={\varRadius}];
   \draw[thick, fill=lightgray] (3,5) circle [radius={\varRadius}];
  \node[below] at ({5+\varMargen},-0.2) {$y$}; 
  \node[left] at (-0.2,5) {$p$};
\end{tikzpicture}

		\caption{The Newton polygon of the algebraic curve $F(y,p)=0$. All its sides have non-negative slope, because the point $(0,0)\in \mathcal{N}(F)$.}
		\label{fig:2}
	\end{figure}
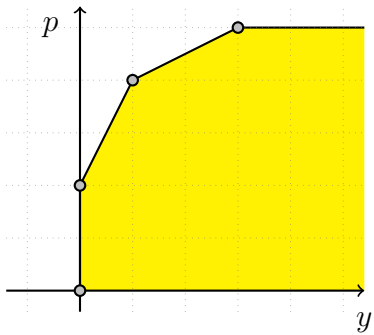
	
	Since the degree of $F(y,p)$ with respect to $p$ is positive, $\mathcal{N}(F)$ has at least one side.
	Therefore, by Puiseux's Theorem, there exists a convergent Puiseux series solution $p(y)$ of the algebraic equation $F(y,p(y))=0$ of the form $p(y)=\sum_{i \geq k} c_i\,y^{i/n}$, where $c_k\neq 0$ and $k\leq 0$.
	Let us define $a(t)=t^{n}$ and $b(t)=\sum_{i=k}^{\infty} c_i\,t^{i}$.
	Then $(a(t),b(t))$ is a convergent parametrization of $\cc$ satisfying
	\begin{displaymath}
	m=\ord_t(a(t)-a(0))-\ord_t(b(t))=n-k\geq n\geq 1.
	\end{displaymath}
	By Theorem \ref{theorem:characterizationSolPlace}, there exists a formal Puiseux series solution $y(x)$ of the differential equation $F(y,y')=0$ and $\ord_x(y(x))>0$ which proves the theorem.
\end{proof}

Notice that in Theorem \ref{tm:existence} we can give a lower and an upper bound for the number of solution parametrizations passing through a given point $(x_0,y_0) \in \C^2$.
First, every side with slope greater or equal to zero defines a different solution parametrization.
Thus, a lower bound can easily be derived after computing the Newton polygon $\mathcal{N}(F)$.

Second, let $\Sigma_{(x_0,y_0)}$ denote the set of solution parametrizations passing through $(x_0,y_0)$.
The set of corresponding solution places is denoted by $\cP(y_0)=\{[(a(t),b(t))]~|~ (a(t),b(t)) \in \Sigma_{(x_0,y_0)} \}$.
Since every solution parametrization passing through $(x_0,y_0)$ is a solution parametrization centered at $(y_0,p_0)$ for some $p_0 \in \C_{\infty}$, by Lemma \ref{lemma:number of solution parametrizations}, $$\#\Sigma_{(x_0,y_0)}=\sum_{P \in \cP(y_0)} \text{ramification order of } P \leq \deg_p(F).$$
The last inequality is a well known result for algebraic curves and can be found for example in \cite{Duval1989}[Theorem 1].

As a consequence for example the family of functions $$y(x)=x+c\,x^2,$$ where $c$ is an arbitrary constant, cannot be a solution of any first order autonomous ordinary differential equation.
Otherwise, there are infinitely many distinct formal parametrizations $(y(x),y'(x))$ with $y_0=0$ as initial value and the sum of the ramification orders of $P \in \cP(y_0)$ is infinite in contradiction to the bound above.

We note that there might be families of formal Puiseux series solutions at infinity for an autonomous first order ordinary differential equation as we will see in Example \ref{ex-infinity}.

\section{Algorithms and Examples}\label{sec-algorithms}

In this section we outline an algorithm that is derived from the results in Section \ref{sec-PSP}, in particular, for $h \in \{0,2\}$.
We can describe algorithmically all formal Puiseux series solutions of the differential equation \eqref{eq-main}.
For each formal Puiseux series solution we will provide what we call a \textit{determined solution truncation}.
A determined solution truncation is an element of $\C[x^{1/n}][x^{-1}]$, for some $n \in \mathbb{Z}_{>0}$, that can be extended uniquely to a formal Puiseux series solution.

\para

If $F$ is reducible, one could factor it and consider its irreducible components and the solutions of the corresponding differential equations.
However, from a computational point of view, this is not optimal, and we compute the square-free part of $F$ instead.
So let us assume $F \in \C[y,p]$ to be square-free and have no factor in $\C[y]$ or $\C[p]$ in the remaining of the paper.
Since each formal Puiseux series solution $y(x)$ gives rise to an initial tuple $\qq=(y(0),(x^hy'(x))(0))$ in $\cc$, we will describe for each point $\qq \in \cc$ the set $\Sol$.
We note that if $\ord_x(y) \geq 0$ and $h \geq 2$, then $\qq$ will necessarily be of the type $(y_0,0)$ for some $y_0 \in \C$.

\subsection{Solutions expanded around zero} \label{subsec-zero}

In this subsection we consider formal Puiseux series solutions of \eqref{eq-main}, or equivalently, solutions of \eqref{eq-infinity} with $h=0$ expanded around zero.
A point $\qq=(y_0,p_0) \in \cc$ is called a \textit{critical curve point} if either $p_0 \in \{0,\infty\}$ or $\frac{\partial F}{\partial p}(\qq)=0$ (compare \cite{FalkensteinerSendra_2018}).
Under our assumptions, the set of critical curve points, denoted by $\mathcal{B}(F)$, is finite. 

If $\qq=(y_0,p_0) \in \cc \setminus \mathcal{B}(F)$, we can apply the method of limits (see Chapter XII in \cite{Ince1926}).
The only formal Puiseux series solution with $\qq$ as initial tuple is a formal power series and its determined solution truncation is given by $y_0+p_0x$.

The points $\qq=(y_0,\infty) \in \cc$ with $y_0 \in \C$, can be computed by considering $F \in \C[y][p]$ and determining the zeros of the leading coefficient in $y$.
As already remarked in Section \ref{sec:preliminaries}, the possible curve point $(\infty,\infty)$ can be handled by a suitable change of variables.
Note that there cannot be a solution with an initial tuple of the form $(\infty, p_0)$ with $p_0 \in \C$, because if $\ord_x(y(x))<0$ then $\ord_x(y'(x))<0$ as well.

\para

Assume that $\qq \in \mathcal{B}(F)$ is a critical curve point.
Let $\RT \subseteq \C[t][t^{-1}]$ denote the set of truncations of non-equivalent classical Puiseux para\-metrizations $(y_0+t^k,b(t)) \in \Param$, where the first $N$ terms of $b(t)$ are computed.
In \cite{Duval1989} is presented an algorithm to compute $\RT$ and with $N$ equal to $2(\deg_p(F)-1) \deg_y(F)+1$ or the Milnor number (see \cite{RISC4119}) is given a bound for the truncation such that $\RT$ is in one-to-one correspondence to $\Places$.
Moreover, then the ramification orders of the approximated places are determined such that we can check whether equation \eqref{eq:nec_eq_order_ramification} holds. 
Following the proof of Lemma \ref{le:existence_and_convegence_of_reparametriztion}, we can describe all formal Puiseux series solutions with $\qq$ as initial tuple as Algorithm PuiseuxSolve shows.
By choosing the bound $N=2(\deg_p(F)-1) \deg_y(F)+1$ no further extensions of the ground field for computing the coefficients are required.

\begin{algorithm}[H]
	\caption{PuiseuxSolve}
	\label{alg-solutions at zero}
	\begin{algorithmic}[1]
		\REQUIRE A first-order AODE $F(y,y')=0$, where $F\in \C[y,p]$ is square-free with no factor in $\C[y]$ or $\C[p]$.
		\ENSURE A set consisting of all determined solution truncations of $F(y,y')=0$ expanded around zero.
		\STATE If $(\infty,\infty) \in \cc$, then perform the transformation $\tilde{y}=1/y$ and apply the algorithm to the numerator of $F(1/y,-p/y^2)$ and $\qq=(0,0)$.
		\STATE Compute the set of critical curve points $\mathcal{B}(F)$.
		\STATE For every point $(y_0,p_0) \in \cc \setminus \mathcal{B}(F)$ a determined solution truncation is $y_0+p_0x$.
		\STATE For every critical curve point $\qq=(y_0,p_0) \in \mathcal{B}(F)$ with $y_0 \in \C$ we compute the finite set $\RT$, where $N=2(\deg_p(F)-1) \deg_y(F)+1$.
		\STATE If $p_0=0$, then add to the output the constant solution $y(x)=y_0$.
		\STATE For every truncation $(\hat{a}(t),\hat{b}(t)) \in \RT$ corresponding to $[(a(t),b(t)] \in \Places$, equation \eqref{eq:nec_eq_order_ramification} can be checked.
		\STATE In the negative case, $[(a(t),b(t)]$ is not a solution place.
		\STATE In the affirmative case compute by the Newton polygon method for differential equations the first $N$ terms of the solutions $s_1(t),\ldots,s_n(t)$ of \eqref{eq:DE_2}, denoted by $\hat{s}_1(t),\ldots,\hat{s}_n(t)$.
		\STATE Then the first $N$ terms of $\hat{a}(\hat{s}_i(x^{1/n}))$ are the determined solution truncations with $\qq$ as initial tuple.
	\end{algorithmic}
\end{algorithm}

In the following theorem we show that the output truncations are indeed determined solution truncations, which also proves correctness of Algorithm PuiseuxSolve.

\begin{theorem} \label{cor: bound}
	Let $F \in \C[y,p]$ be square-free with no factor in $\C[y]$ or $\C[p]$.
	Then the set of truncated solutions obtained by the Algorithm PuiseuxSolve with $\qq$ as initial tuple, denoted by $\ST$, and $\Sol$ are in one-to-one correspondence.
\end{theorem}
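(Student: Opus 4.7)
The plan is to analyse each branch of Algorithm PuiseuxSolve separately and match its outputs bijectively with the relevant portion of $\Sol$. Since Lemma~\ref{lemma:ram order solution place} asserts that $\Delta:\Sol \to \Param$ is injective, it suffices to establish the bijection at the level of solution parametrizations and then compose with $\Delta^{-1}$.

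For $\qq \in \cc \setminus \mathcal{B}(F)$, the method of limits produces a unique analytic power series solution with $y(0)=y_0$ and $y'(0)=p_0$, whose two-term Taylor polynomial $y_0+p_0 x$ is exactly the algorithm's output; truncations attached to different centres are obviously distinct. The branch $p_0=0$ at a critical point similarly contributes the constant solution $y(x)=y_0$, which is manifestly determined by the single value $y_0$. Critical points with $y_0=\infty$ are reduced to the previous cases by the change of variables $\tilde y = 1/y$.

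The core of the argument is the case $\qq=(y_0,p_0) \in \mathcal B(F)$ with $y_0 \in \C$, which I would handle in three stages. First, the Duval bound $N=2(\deg_p F - 1)\deg_y F +1$ makes the truncation map $\Places \to \RT$ a bijection that also preserves the ramification order and the orders $\ord_t(a-y_0),\ord_t(b)$; consequently condition \eqref{eq:nec_eq_order_ramification} is decidable on truncations, and by Theorem~\ref{theorem:characterizationSolPlace} the truncations passing the test are precisely the solution places. Second, for each solution place of ramification order $n$, Lemma~\ref{le:existence_and_convegence_of_reparametriztion} case (1) (applied with $h=0$) furnishes exactly $n$ order-one series $s_1(t),\ldots,s_n(t)$ satisfying \eqref{eq:DE_2}, distinguished by the $n$-th roots $\sigma_1$ of $n\,b_0/(k\,a_k)$; and part~(b) of the same lemma ensures that the coefficient $\sigma_m$ of $s_i$ depends only on $\sigma_1$ and on those $a_j,b_j$ with $j\leq m-1$, so that $N$ terms of $(\hat a,\hat b)$ suffice to compute $N$ terms of each $\hat s_i$ unambiguously. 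Third, by Proposition~\ref{Theorem:Characterization Sol Param} and Lemma~\ref{lemma:number of solution parametrizations}(1), each composition $\hat a(\hat s_i(t))$ is the truncation of a distinct solution parametrization within the place, whose preimage under $\Delta$ yields the corresponding element of $\Sol$.

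It then remains to verify that the assignment $\Sol \to \ST$ sending $y$ to its truncation $\hat a(\hat s_i(x^{1/n}))$ is bijective. Injectivity is obtained because equality of two truncations forces equal centres, equal places (Duval), and equal leading coefficients $\sigma_1$, which pins down the same reparametrization and hence, via $\Delta^{-1}$, the same solution. Surjectivity follows because every $y \in \Sol$ induces via $\delta$ a solution place which, by Lemma~\ref{lemma:nec_cond_DE}, satisfies \eqref{eq:nec_eq_order_ramification}; the algorithm recognises this place in $\RT$, and by Lemma~\ref{le:existence_and_convegence_of_reparametriztion} the parametrization $\Delta(y)$ equals $(a\circ s_i,\,b\circ s_i)$ for exactly one index $i$. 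The main obstacle is precisely the propagation of truncations through the composition $\hat a \circ \hat s_i$: one must verify that keeping only $N$ terms of $\hat a$ and of $\hat s_i$ does not lose information about the first $N$ coefficients of $y(x)$ itself. This rests on Lemma~\ref{le:existence_and_convegence_of_reparametriztion}(b), which confines the dependence of $\sigma_m$ to the first $m-1$ coefficients of $a$ and $b$, so that the bound inherited from Duval is more than sufficient to guarantee that each algorithm output is indeed a determined solution truncation.
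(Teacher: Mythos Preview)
Your overall strategy is sound, and the surjectivity direction is handled correctly via Lemma~\ref{lemma:nec_cond_DE}, Theorem~\ref{theorem:characterizationSolPlace} and Lemma~\ref{le:existence_and_convegence_of_reparametriztion}. The gap is in the injectivity step, where you write that ``equality of two truncations forces \ldots\ equal places (Duval), and equal leading coefficients $\sigma_1$''. Neither implication is justified. Duval's bound guarantees that $N$-term truncations of the \emph{classical Puiseux parametrizations} $(y_0+t^k,\hat b(t))$ distinguish places; it says nothing directly about truncations of the \emph{solutions} $\hat a(\hat s_i(x^{1/n}))$. Recovering the place from the solution truncation would require normalising $(\tilde y(t^n),\tilde y'(t^n))$ back to classical form, and you give no argument that $N$ terms survive this passage. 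Moreover, the leading nontrivial coefficient of the output is $\sigma_1^{k}$, not $\sigma_1$; when $\gcd(k,n)>1$ two distinct $n$-th roots of $n b_0/(k a_k)$ can share the same $k$-th power, so the leading coefficient alone need not pin down the reparametrization.

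The paper closes exactly this gap by a direct coefficient comparison rather than an appeal to Duval on the output side. Given two truncated classical parametrizations $\hat A_1\neq\hat A_2$ with the same $k$ and $r$, one first renormalises (replacing $\hat A_2$ by $\hat A_2(\lambda t)$ with $\lambda^k=1$ if necessary) so that the $\hat b_i$ agree up to some first index $m$. One then traces the recursion~\eqref{eq:Brior-Bouquer_recursive_equation} underlying Lemma~\ref{le:existence_and_convegence_of_reparametriztion}(b) to show that the discrepancy $b_{1,m}\neq b_{2,m}$ forces $\sigma_{1,m+1}\neq\sigma_{2,m+1}$, and finally that this propagates to the coefficient of $x^{(k+m)/n}$ in $\hat a(\hat s_i(x^{1/n}))$. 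This coefficient-level bookkeeping is the substantive content you are missing; invoking Duval and Lemma~\ref{le:existence_and_convegence_of_reparametriztion}(b) at the level you do is not enough to conclude that distinct $(\hat A,\hat s)$ pairs produce distinct truncated outputs.
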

\begin{proof}
	Let $\qq \in \cc$.
	From \cite{Duval1989} it follows that $\# \RT = \# \Places$.
	By Proposition \ref{Theorem:Characterization Sol Param}, $\# \Sol= \sum_{i=1}^{\# \Places} n_i$ where every summand $n_i$ is equal to the ramification order of the corresponding place (or $0$, if \eqref{eq:nec_eq_order_ramification} is not fulfilled).
	It remains to prove that $\# \ST = \sum_{i=1}^{\# \RT} n_i$, or in other words, that all output elements generated by Algorithm PuiseuxSolve are distinct.
	
	Let $\hat{A}_i=(\hat{a}_i(t),\hat{b}_i(t)) \in \RT$ with $\hat{a}_i(t)=y_0+t^{k_i}$, $\hat{b}_i(t)=\sum_{j \geq 0}^{H} b_{i,j}t^{r_i+j}$ and $n_i=k_i-r_i>0$ for $i=1,2$.
	If $k_1 \neq k_2$ or $r_1 \neq r_2$, the statement holds. 
	So let us assume that $k=k_1=k_2$ and $r=r_1=r_2$ and set $n=k-r$.
	
	Let $\hat{A}_1 \neq \hat{A}_2$. 
	If the quotient $b_{1,0}/b_{2,0}$ is equal to $\lambda \in \C$ with $\lambda^{k}=1$, then let us choose the classical Puiseux parametrization $\hat{A}_2(\lambda t)$ instead of $\hat{A}_2$. 
	Then $b_{1,0}=b_{2,0}$. 
	By definition of $\RT$, $\hat{b}_1(t) \neq \hat{b}_2(t)$. 
	Let us choose $m \in \mathbb{Z}_{\geq 0}$ as the first index such that $b_{1,m} \neq b_{2,m}$. 
	Note that if $m=0$, then $b_{1,0}^k \neq b_{2,0}^k$.
	
	Let $\hat{s}_i(t)=\sum_{j=1}^{H} \sigma_{i,j}\,t^j$ be the truncated solutions of~\eqref{eq:DE_2} corresponding to $\hat{A}_i(t)$.
	First, assume that $m=0$. 
	Then $$\sigma_{1,1}=\sqrt[n]{\frac{n\,b_{1,0}}{k}} \neq \sigma_{2,1}=\sqrt[n]{\frac{n\,b_{2,0}}{k}}$$ and also $\sigma_{1,1}^k \neq \sigma_{2,1}^k$. 
	The coefficient of $t^k$ in $\hat{a}_i(\hat{s}_i(t))$ is equal to $\sigma_{i,1}^k$ and thus, the outputs $\hat{a}_i(\hat{s}_i(x^{1/n}))$ are distinct already in the first coefficient.
	
	Now let us consider $m>0$. 
	Then $\sigma_{1,1}^n=\sigma_{2,1}^n$ and without loss of generality we can choose $\sigma_{1,1}=\sigma_{2,1}$.
	By item (b) in Lemma \ref{le:existence_and_convegence_of_reparametriztion}, and the fact that $\hat{a}_1(t)=\hat{a}_2(t)$, the coefficients $\sigma_{1,1},\ldots,\sigma_{1,m}$ coincide with $\sigma_{2,1},\ldots,\sigma_{2,m}$. 
	In order to show that $\sigma_{1,m+1} \neq \sigma_{2,m+1}$ we consider the coefficient of $t^{m}$ in \eqref{eq:DE_BriotBouquet} by setting $z_i(t)=\sum_{i=2}^H \sigma_i\,t^{i-1}$ and $\sigma=\sigma_{i,1}$. 
	On the left hand side we obtain the coefficient of $t^{m}$ in $k\,(\sigma_{i,1}+z_i(t))^{\nu-1}\,t\,z'(t)$, which is equal to $k\,m\,\sigma_{i,1}^{\nu-1}\,\sigma_{i,m+1}$ plus a polynomial expression in $\sigma_{i,1},\ldots,\sigma_{i,m}$. 
	On the right hand side we obtain from the first summand $n\,b_m\,\sigma_{i,1}^m$ and from the second summand $-k\,(\nu-1)\,\sigma_{i,1}^{\nu-1}\,\sigma_{i,m+1}$ plus a polynomial expression in $\sigma_{i,1},\ldots,\sigma_{i,m},b_0,\ldots,b_{m-1}$.
	We can uniquely solve the corresponding equation for $\sigma_{i,m+1}$, where one summand is equal to $n\,b_{i,m}\,\sigma_{i,1}^m$ and the other summands do not depend on $b_{i,m}$; compare with \eqref{eq:Brior-Bouquer_recursive_equation}. 
	Since $b_{1,m} \neq b_{2,m}$, it follows that $n\,b_{1,m}\,\sigma_{1,1}^m \neq n\,b_{2,m}\,\sigma_{2,1}^m$ and therefore, $\sigma_{1,m+1} \neq \sigma_{2,m+1}$. 
	The coefficient of $t^{k+m}$ in $\hat{a}_i(\hat{s}_i(t))$ is equal to $k\,\sigma_{i,1}^{k-1}\, \sigma_{i,m+1}$ plus a polynomial expression in $\sigma_{i,1},\ldots,\sigma_{i,m}$.
	Thus, the outputs $\hat{a}_i(\hat{s}_i(x^{1/n}))$ are distinct.
\end{proof}

\begin{example}[Example 2 in \cite{FalkensteinerSendra_2018}]
	Let us consider $$F=((y'-1)^2+y^2)^3-4(y'-1)^2y^2=0.$$
	The generic solution is given by $y(x;y_0)=y_0+p_0x+\mathcal{O}(x^2) \in \C[[x]]$ with $p_0 \in \C$ such that $F(y_0,p_0)=0$ and $\frac{\partial F}{\partial p}(y_0,p_0) \neq 0$. 
	The critical set is $\mathcal{B}= \{(0,1),(\alpha,0), (\frac{4\beta}{9},\gamma),(\infty,\infty)\}$ where $\alpha^6+3\alpha^4-\alpha^2+1=0$, $\beta^2=3,$ and $27\gamma^2-54\gamma +19=0$.
	Observe that, since the leading coefficient of $F$ w.r.t. $y$ is $1$, there is no curve point of the form $(y_0,\infty)$ with $y_0 \in \C$.
	
	We now analyze the critical curve points. Let $\mathbf{c}_{\alpha}=(\alpha,0)$ where $\alpha^6+3\alpha^4-\alpha^2+1=0$.
	We get the place
	$$\left(\alpha+t,\frac{\alpha}{19}\,(11\alpha^4+36\alpha^2+4)\,t+ \mathcal{O}(t^2) \right),$$
	which does not provide any solution (see equation \eqref{eq:nec_eq_order_ramification}).
	Thus, the constant $\alpha$ is the only solution with the initial tuple $\mathbf{c}_{\alpha}$.
	
	Let $\mathbf{c}_1=(0,1)$.
	The truncated classical Puiseux parametrizations at $\mathbf{c}_1$ are
	\[ \begin{array}{ll}
	(a_1(t),b_1(t)) = (t^2,1+\sqrt{2} t-\frac{3t^3}{4 \sqrt{2}}-\frac{15t^5}{64 \sqrt{2}}+\mathcal{O}(t^6)), \\[0.1cm]
	(a_2(t),b_2(t)) = (t^2,1-\sqrt{2}it-\frac{3it^3}{4 \sqrt{2}}+\frac{15 it^5}{64 \sqrt{2}}+\mathcal{O}(t^6)), \\[0.1cm]
	(a_3(t),b_3(t)) = (t,1+\frac{it^2}{2}+\frac{3it^4}{8}+\mathcal{O}(t^6)), \\[0.1cm]
	(a_4(t),b_4(t)) = (t,1-\frac{it^2}{2}-\frac{3it^4}{8}+\mathcal{O}(t^6)).
	\end{array} \]
	So we have $n=2$ for $(a_1(t),b_1(t))$ and $(a_2(t),b_2(t))$ and $n=1$ for $(a_3(t),b_3(t))$ and $(a_4(t),b_4(t))$.
	Then equation \eqref{eq:DE_2} corresponding to $(a_1(t),b_1(t))$ is $$s(t)\,s'(t)=t\,\left(1+\sqrt{2}s(t)-\frac{3s(t)^3}{4 \sqrt{2}}-\frac{15s(t)^5}{64 \sqrt{2}}\right).$$
	We obtain the solutions
	\[ \begin{array}{ll}
	s_1(t)=t+\frac{\sqrt{2}t^2}{3}+\frac{t^3}{18}-\frac{89t^4}{540\sqrt{2}}+\mathcal{O}(t^5), \\[0.1cm] s_2(t)=-t+\frac{\sqrt{2}t^2}{3}-\frac{t^3}{18}-\frac{89t^4}{540\sqrt{2}}+\mathcal{O}(t^5).
	\end{array} \]
	Therefore, $a_1(s_1(x^{1/2}))$ and $a_1(s_2(x^{1/2}))$ are determined solution truncations of $F(y,y')=0$.
	
	Similarly we can find two determined solution truncations coming from $(a_2(t),b_2(t))$ and one for each $(a_3(t),b_3(t))$ and $(a_4(t),b_4(t))$.
	We note that the solutions corresponding to $(a_3(t),b_3(t))$ and $(a_4(t),b_4(t))$ are formal power series and already detected in \cite{FalkensteinerSendra_2018}.
	Thus,
	\[
	\mathcal{U}_{\mathbf{c}_1}=\left\{
	\begin{array}{ll}
	a_1(s_1(x^{1/2}))=x+\frac{2\,\sqrt{2}\,x^{3/2}}{3}+\frac{x^2}{3}+\mathcal{O}(x^{5/2}), \\[0.1cm]
	a_1(s_2(x^{1/2}))=x-\frac{2\sqrt{2}x^{3/2}}{3}+\frac{x^2}{3}+\mathcal{O}(x^{5/2}), \\[0.1cm]
	a_2(\tilde{s}_1(x^{1/2}))=x+\frac{2\sqrt{2}ix^{3/2}}{3} -\frac{x^2}{3}+\mathcal{O}(x^{5/2}), \\[0.1cm]
	a_2(\tilde{s}_2(x^{1/2}))=x-\frac{2\sqrt{2}ix^{3/2}}{3} -\frac{x^2}{3}+\mathcal{O}(x^{5/2}), \\[0.1cm]
	a_3(s(x))=x+\frac{x^3}{6} +\frac{17x^5}{240}+\mathcal{O}(x^6), \\[0.1cm]
	a_4(s(x))=x-\frac{x^3}{6} +\frac{17x^5}{240}+\mathcal{O}(x^6)
	\end{array}
	\right\} \]
	is the set of all determined solution truncations with $\mathbf{c}_1$ as initial tuple.
	
	Let $\mathbf{c}_{\beta,\gamma}=\left(\frac{4\beta}{9},\gamma\right)$, where $\beta^2=3,$ and $ 27\gamma^2-54\gamma +19=0$.
	We get the place $$\left(\frac{4\beta}{9}+t^2,\gamma+\frac{\sqrt{\beta}\,i}{\sqrt{3}}t+\mathcal{O}(t^2)\right).$$
	Thus, \eqref{eq:nec_eq_order_ramification} is fulfilled with $n=2$.
	Similarly as before, we obtain at $\mathbf{c}_{\beta,\gamma}$ the set of solutions
	\[
	\mathcal{U}_{\mathbf{c}_{\beta,\gamma}}=\left\{
	\begin{array}{ll}
	\frac{4 \beta}{9}+\gamma x+\frac{2\sqrt{-\gamma\,\beta}}{3 \sqrt{3}} x^{3/2} + \left(\frac{5 \gamma}{32}-\frac{143}{864} \right) \beta x^2 +\mathcal{O}(x^{5/2}), \\[0.1cm]
	\frac{4 \beta}{9}+\gamma x-\frac{2\sqrt{-\gamma\,\beta}}{3 \sqrt{3}} x^{3/2} + \left(\frac{5 \gamma}{32}-\frac{143}{864} \right) \beta x^2 +\mathcal{O}(x^{5/2})
	\end{array}
	\right\}. \]
	
	Let us analyze $\mathbf{c}_{\infty}=(\infty,\infty)$.
	The numerator of $F(1/y,-y'/y^2)$ is equal to
	\begin{align*}
	G=&y^{12}+(6y'-1)y^{10}+(15y'^2+4y'+3)y^8+(20y'^3+14y'^2+6y'+1)y^6+ \\
	&(15y'^4+12y'^3+3y'^2)y^4+(6y'^5+3y'^4)y^2+y'^6.
	\end{align*}
	The places at the origin of $\mathscr{C}(G)$ are given by $$(t^3,\pm i\,t^3 + \mathcal{O}(t^4)),$$ which do not define a solution place.
	
	Now the set $\{y(x;y_0)\} \cup~ \{\alpha\} \cup~ \mathcal{U}_{\mathbf{c}_1} \cup~ \mathcal{U}_{\mathbf{c}_{\beta,\gamma}}$ describes all formal Puiseux series solutions of $F=0$.
\end{example}

\subsection{Solutions expanded at infinity} \label{subsec-infinity}
In this subsection we describe the formal Puiseux series solutions of \eqref{eq-main} expanded around infinity, or equivalently up to the sign, formal Puiseux series solutions of \eqref{eq-infinity} with $h=2$ expanded around zero. 
The different sign in the second component requires to use, in all reasonings and results, a change in the sign of the second component of the formal parametrizations as well. 
This implies, in the same notation as in equation \eqref{eq:DE_2}, the slightly different associated differential equation for solutions expanded around infinity, namely
\begin{equation} \label{eq-assocInfinity}
a'(s(t))\,s'(t)=-m\,t^{-m-1}\,b(s(t)).
\end{equation}

As we have already remarked, condition \eqref{eq:nec_eq_order_ramification} is only a necessary and not a sufficient condition. 
That there are some cases which fulfill \eqref{eq:nec_eq_order_ramification} for some $n \in \mathbb{Z}_{>0}$ but do not lead to solutions is shown in the following example.
\begin{example}
	We are looking for formal Puiseux series solutions expanded around infinity of $F(y,y')=(1+y)\,y'+y^2.$ 
	Instead we consider $$F(y,-x^2y')=-x^2\,(1+y)\,y'+y^2=0.$$
	By the Newton polygon method for differential equations we directly see that there is no formal Puiseux series solution with $y(0)=0$ except the constant zero.
	On the other hand, $(t,-t^2+\mathcal{O}(t^3))$ is a formal parametrization of $\cc$ fulfilling \eqref{eq:nec_eq_order_ramification} with $n=1$.
\end{example}

Nevertheless, it can still be checked whether there exists a solution fulfilling the necessary condition \eqref{eq:nec_eq_order_ramification} or not and therefore, we can algorithmically compute all solutions as in the previous subsection.

Similar to Section \ref{subsec-zero} let $\RT \subseteq \C[t][t^{-1}]$ denote the set of truncations of non-equivalent classical Puiseux parametrizations where the first $N$ terms are computed.

\begin{algorithm}[H]
	\caption{PuiseuxSolveInfinity}
	\label{alg-solutions at infinity}
	\begin{algorithmic}[1]
		\REQUIRE A first-order AODE $F(y,y')=0$, where $F\in \C[y,p]$ is square-free with no factor in $\C[y]$ or $\C[p]$.
		\ENSURE A set consisting of all solution truncations of $F(y,y')=0$ expanded around infinity.
		\STATE Compute the algebraic set $\mathbb{V}(F(y,0))$.
		\STATE For every $y_0 \in \mathbb{V}(F(y,0))$ compute the finite set $\RT$, where $N=\max(2(\deg_p(F)-1) \deg_y(F)+1,\deg_y(F)+1)$.
		\STATE Add to the output the constant solutions $y(x)=y_0$.
		\STATE For every truncation $(\hat{a}(t),\hat{b}(t)) \in \RT$ corresponding to $[(a(t),b(t)] \in \Places$, check equation \eqref{eq:nec_eq_order_ramification}.
		\STATE In the negative case, $[(a(t),b(t)]$ is not a solution place.
		\STATE In the affirmative case check by the Newton polygon method for differential equations whether~\eqref{eq-assocInfinity} is solvable. 
		Note that the critical term with index $n$ is already covered by the first $N$ terms, since $n \leq \deg_y(F)+1 \leq N$.
		\STATE In the affirmative case compute the first $N$ terms of the solutions $s_1(t),\ldots,s_n(t)$ denoted by $\hat{s}_1(t),\ldots,\hat{s}_n(t)$, which contain a free parameter.
		\STATE The first $N$ terms of $\hat{a}(\hat{s}_i(x^{-1/n}))$ are solution truncations with $y_0$ as initial value.
	\end{algorithmic}
\end{algorithm}

Let $y_0 \in \C$ be such that $\qq=(y_0,0) \in \cc$ and let us again denote the formal Puiseux series solutions expanded around infinity with $\qq$ as initial tuple by $\Sol$ and the output of Algorithm PuiseuxSolveInfinity by $\ST$.

Since $\mathbb{V}(F(y,0))$ is a finite set and termination of the Newton polygon method for computing formal parametrizations and of the Newton polygon method for computing the reparametrizations is ensured, also termination of Algorithm \ref{alg-solutions at infinity} follows.
Correctness of Algorithm \ref{alg-solutions at infinity} follows from Section \ref{sec-PSP} and the following corollary.

\begin{corollary} \label{cor: bound infinity}
	Let $F \in \C[y,p]$ be square-free with no factor in $\C[y]$ or $\C[p]$.
	Then every solution truncation $\tilde{y}(x) \in \ST$ can be extended to $y(x) \in \Sol$ and conversely, for every $y(x) \in \Sol$ there exists a truncation $\tilde{y}(x) \in \ST$.
\end{corollary}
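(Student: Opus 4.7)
I plan to adapt the proof of Theorem~\ref{cor: bound} to the resonant case $h=2$: instead of a bijection between truncations and solutions, I will establish a pair of surjectivity statements, tracking the free parameter that now appears at index $r-k=n$ in the reparametrizations produced by Lemma~\ref{le:existence_and_convegence_of_reparametriztion}(2). The two summands in $N=\max(2(\deg_p(F)-1)\deg_y(F)+1,\deg_y(F)+1)$ are introduced precisely for this purpose: Duval's bound will force $\RT$ to be in bijection with $\Places$, while the second inequality $N\geq n$ will ensure that the critical index at which the free parameter appears is already visible in the truncation $\hat{s}_i(t)$.

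For the forward direction, given $\tilde{y}(x)=\hat{a}(\hat{s}_i(x^{-1/n})) \in \ST$ arising from a truncation $\hat{A}=(\hat{a}(t),\hat{b}(t)) \in \RT$ satisfying \eqref{eq:nec_eq_order_ramification} together with a truncated solution $\hat{s}_i(t)$ of \eqref{eq-assocInfinity} certified by the Newton polygon method, I would first use Duval's bound to extend $\hat{A}$ uniquely to a classical Puiseux parametrization $(a(t),b(t))$ of some place $\cP\in\Places$. Since $N\geq n$, Lemma~\ref{le:existence_and_convegence_of_reparametriztion}(2) supplies a complete formal power series $s(t)\in\C[[t]]$ of order one extending $\hat{s}_i(t)$ (for any admissible value of the free parameter at index $n$). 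Then Proposition~\ref{Theorem:Characterization Sol Param} turns $(a(s(t)),b(s(t)))$ into a solution parametrization, and $\Delta^{-1}$ produces $y(x)\in\Sol$ whose first $N$ terms recover $\tilde{y}(x)$.

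For the reverse direction, starting from $y(x)\in\Sol$ I would set $(a(t),b(t))=\Delta(y(x))$, and exploit that $\ord_x(y(x))\geq 0$ forces $p_0=(x^{2}y'(x))(0)=0$, so $y_0\in\mathbb{V}(F(y,0))$ is among the points visited in Step~2. Duval's bound provides a unique $\hat{A}\in\RT$ representing $[(a(t),b(t))]$, and Lemma~\ref{lemma:nec_cond_DE} gives \eqref{eq:nec_eq_order_ramification}. Moreover, the trivial reparametrization $s(t)=t$ itself solves \eqref{eq-assocInfinity} for this specific representative, so the Newton polygon method will certify solvability in Step~6 and the algorithm will emit some $\hat{a}(\hat{s}_i(x^{-1/n}))\in\ST$ that, for the specialization of the free parameter reproducing $s(t)=t$, agrees with $y(x)$ up to order $N$. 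The main obstacle I anticipate is the careful bookkeeping of this free parameter: unlike the non-resonant case of Theorem~\ref{cor: bound}, a single element of $\RT$ satisfying \eqref{eq:nec_eq_order_ramification} may either yield no solution or an entire one-parameter family, and correspondingly each output of the algorithm is a symbolic truncation with a free coefficient. Verifying that every admissible specialization lifts to a bona fide element of $\Sol$ and that every element of $\Sol$ is captured by some specialization is the delicate part, and the second bound $N\geq\deg_y(F)+1$ is precisely what guarantees the critical index lies within the truncation so that this correspondence is visible and consistent.
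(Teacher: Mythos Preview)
Your plan is correct and follows essentially the same route as the paper's own proof, which is a terse three-line argument citing that the algorithm treats all places satisfying \eqref{eq:nec_eq_order_ramification}, that Lemma~\ref{le:existence_and_convegence_of_reparametriztion} finds all solutions of \eqref{eq-assocInfinity}, and that Proposition~\ref{Theorem:Characterization Sol Param} converts between solution parametrizations and elements of $\Sol$; your proposal is simply a careful unpacking of this. One small point to tighten in the reverse direction: the algorithm tests solvability of \eqref{eq-assocInfinity} for the \emph{classical} Puiseux representative $\hat{A}$, not for $\Delta(y(x))$ itself, so rather than saying ``$s(t)=t$ solves \eqref{eq-assocInfinity} for this representative'' you should note that writing $\Delta(y(x))=\hat{A}(r(t))$ for some order-one $r(t)$, equation \eqref{eq:necessary_DE} for $\Delta(y(x))$ translates into $r(t)$ solving \eqref{eq-assocInfinity} for $\hat{A}$, which is what certifies solvability in Step~6.
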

\begin{proof}
	In Algorithm \ref{alg-solutions at infinity} all places fulfilling the necessary condition \eqref{eq:nec_eq_order_ramification} are treated. 
	By Lemma \ref{le:existence_and_convegence_of_reparametriztion}, all solutions of~\eqref{eq-assocInfinity} are found and by Proposition \ref{Theorem:Characterization Sol Param} the statement holds.
\end{proof}

In Theorem \ref{cor: bound} we were able to additionally show that the corresponding output truncations $\tilde{y}_1, \tilde{y}_2 \in \RT$ coming from different places or different reparametrizations do not coincide.
However, in Algorithm PuiseuxSolveInfinity we cannot guarantee this.
The problem for adapting the proof of Theorem \ref{cor: bound} lies in the free parameter of the reparametrizations.

\begin{example} \label{ex-infinity}
	Let us consider $$F(y,y')=y'+y^2=0$$ and its formal Puiseux series solutions expanded around infinity.
	We obtain $\mathbb{V}(F(y,0))=\{0\}$.
	For $\qq=(0,0)$ compute the formal parametrization $(a,b)=(t,-t^2)$, which fulfills \eqref{eq:nec_eq_order_ramification} with $n=1$.
	Equation \eqref{eq-assocInfinity} simplifies to $s'(t)=t^{-2}\,s(t)^2$ having the solutions $s(t)=\frac{t}{1-c\,t}=t+c\,t^2+c^2\,t^3+\mathcal{O}(t^4)$ for an arbitrary constant $c \in \C$.
	Hence, $$a(s(x^{-1}))=\frac{1}{x-c}=\frac{1}{x}+\frac{c}{x^2}+\frac{c^2}{x^3}+\mathcal{O}(x^{-4})$$ describes all formal Puiseux series solutions expanded around infinity.
\end{example}


\appendix

\bibliographystyle{alpha}

\begin{thebibliography}{}
	
	\bibitem[\protect\citename{Aroca, }2000]{Aroca2000}
	Aroca, J.M. 2000.
	\newblock {\em Puiseux Solutions of Singular Differential Equations}.
	\newblock Basel: Birkh{\"a}user Basel.
	\newblock Pages  129--145.
	
	\bibitem[\protect\citename{Aroca {\em et~al.}, }2005]{aroca2005algebraic}
	Aroca, J.M., Cano, J., Feng, R., \& Gao, X.-S. 2005.
	\newblock Algebraic general solutions of algebraic ordinary differential
	equations.
	\newblock {\em Pages  29--36 of:} {\em Proceedings of the 2005 international
		symposium on Symbolic and algebraic computation}.
	\newblock ACM.
	
	\bibitem[\protect\citename{Aroca~Bisquert, }2000]{ArocaF_2000}
	Aroca~Bisquert, Fuensanta. 2000.
	\newblock {\em M{\'e}todos algebraicos en ecuaciones diferenciales ordinarias
		en el campo complejo}.
	\newblock Ph.D. thesis, Univ. de Valladolid.
	
	\bibitem[\protect\citename{Briot \& Bouquet, }1856]{BriotBouquet-Reserches}
	Briot, C.A., \& Bouquet, J.C. 1856.
	\newblock Recherches sur les propriet{\'e}s des {\'e}quations
	diff{\'e}rentielles.
	\newblock {\em Journal de l'Ecole Polytechnique}, {\bf 21:36}, 133--198.
	
	\bibitem[\protect\citename{Cano, }1993]{CanoJ-1993A}
	Cano, J. 1993.
	\newblock An extension of the {N}ewton-{P}uiseux polygon construction to give
	solutions of {P}faffian forms.
	\newblock {\em Annales de l'Institut Fourier}, {\bf 43}(1), 125--142.
	
	\bibitem[\protect\citename{Cano, }2005]{Cano2005}
	Cano, J. 2005.
	\newblock The Newton Polygon Method for Differential Equations.
	\newblock {\em Pages  18--30 of:} {\em Proceedings of the 6th International
		Conference on Computer Algebra and Geometric Algebra with Applications}.
	\newblock IWMM'04/GIAE'04.
	\newblock Berlin, Heidelberg: Springer-Verlag.
	
	\bibitem[\protect\citename{Della~Dora \& Richard-Jung, }1997]{DoraJung1997}
	Della~Dora, J., \& Richard-Jung, F. 1997.
	\newblock About the {N}ewton {A}lgorithm for Non-linear {O}rdinary
	{D}ifferential {E}quations.
	\newblock {\em Pages  298--304 of:} {\em Proceedings of the 1997 International
		Symposium on Symbolic and Algebraic Computation}.
	\newblock ISSAC '97.
	\newblock New York, NY, USA: ACM.
	
	\bibitem[\protect\citename{Duval, }1989]{Duval1989}
	Duval, D. 1989.
	\newblock {Rational {P}uiseux expansion}.
	\newblock {\em Compositio Mathematica}, {\bf 70}(2), 119--154.
	
	\bibitem[\protect\citename{Falkensteiner \& Sendra,
	}2019]{FalkensteinerSendra_2018}
	Falkensteiner, S., \& Sendra, J.R. 2019.
	\newblock {Formal Power Series Solutions of First Order Autonomous Algebraic
		Ordinary Differential Equations}.
	\newblock {\em Mathematics in Computer Science}.
	
	\bibitem[\protect\citename{Feng \& Gao, }2004]{feng2004rational}
	Feng, R., \& Gao, X.-S. 2004.
	\newblock Rational general solutions of algebraic ordinary differential
	equations.
	\newblock {\em Pages  155--162 of:} {\em Proceedings of the 2004 international
		symposium on Symbolic and algebraic computation}.
	\newblock ACM.
	
	\bibitem[\protect\citename{Feng \& Gao, }2006]{feng2006polynomial}
	Feng, R., \& Gao, X.-S. 2006.
	\newblock A polynomial time algorithm for finding rational general solutions of
	first order autonomous {O}{D}{E}s.
	\newblock {\em Journal of Symbolic computation}, {\bf 41}(7), 739--762.
	
	\bibitem[\protect\citename{Fine, }1889]{fine1889}
	Fine, H. 1889.
	\newblock On the {F}unctions defined by {D}ifferential {E}quations, with an
	{E}xtension of the {P}uiseux {P}olygon {C}onstruction to these {E}quations.
	\newblock {\em American Journal of Mathematics}, {\bf 11}, 317--328.
	
	\bibitem[\protect\citename{Fine, }1890]{fine1890}
	Fine, H. 1890.
	\newblock Singular {S}olutions of {O}rdinary {D}ifferential {E}quations.
	\newblock {\em American Journal of Mathematics}, {\bf 12}, 295--322.
	
	\bibitem[\protect\citename{Grigoriev \& Singer, }1991]{GrSi:1991}
	Grigoriev, D.Y., \& Singer, M. 1991.
	\newblock Solving {O}rdinary {D}ifferential {E}quations in {T}erms of {S}eries
	with {R}eal {E}xponents.
	\newblock {\em Trans A.M.S.}, {\bf 327}, 329--351.
	
	\bibitem[\protect\citename{Ince, }1926]{Ince1926}
	Ince, E. 1926.
	\newblock {\em Ordinary {D}ifferential {E}quations}.
	\newblock Dover.
	
	\bibitem[\protect\citename{Stadelmeyer, }2000]{RISC4119}
	Stadelmeyer, P. 2000.
	\newblock {\em {On the {C}omputational {C}omplexity of {R}esolving {C}urve
			{S}ingularities and {R}elated {P}roblems}}.
	\newblock Ph.D. thesis, RISC, Johannes Kepler University Linz.
	
	\bibitem[\protect\citename{Vo {\em et~al.}, }2018]{RISC5589}
	Vo, N.T., Grasegger, G., \& Winkler, F. 2018.
	\newblock {Deciding the Existence of Rational General Solutions for First-Order
		Algebraic {O}{D}{E}s}.
	\newblock {\em Journal of Symbolic Computation}, {\bf 87}, 127--139.
	
	\bibitem[\protect\citename{Walker, }1950]{walker1950algebraic}
	Walker, R.J. 1950.
	\newblock {\em Algebraic {C}urves}.
	\newblock Princeton University Press.
	
\end{thebibliography}

\end{document}